\documentclass[12pt]{extarticle}
\usepackage{amsmath, amsthm, amssymb, hyperref, color}
\usepackage{graphicx}
\usepackage{caption}
\usepackage{subcaption}
\usepackage{mathtools}
\usepackage{enumerate}
\usepackage{stackengine}
\usepackage{ dsfont }
\usepackage[all]{xypic}
\usepackage{verbatim}
\usepackage{tikz-cd}
\usepackage{upgreek}
\usepackage{chemfig, chemnum}
\usepackage{tikz}
\usepackage[lined,commentsnumbered,ruled]{algorithm2e}
\usepackage{caption}
\usepackage{subcaption}
\usepackage{float}
\usepackage{youngtab}
\tolerance 10000
\headheight 0in
\headsep 0in
\evensidemargin 0in
\oddsidemargin \evensidemargin
\textwidth 6.5in
\topmargin .25in
\textheight 8.8in

\synctex=1

\newtheorem{theorem}{Theorem}[section]
\newtheorem{proposition}[theorem]{Proposition}
\newtheorem{lemma}[theorem]{Lemma}
\newtheorem{corollary}[theorem]{Corollary}
\newtheorem{conjecture}[theorem]{Conjecture}
\theoremstyle{definition}

\newtheorem{remark}[theorem]{Remark}

\newtheorem{example}[theorem]{Example}


\newcommand{\CC}{\mathbb{C} }
\newcommand{\ZZ}{\mathbb{Z}}
\newcommand{\NN}{\mathbb{N}}

\title{\textbf{On Algebraic Theta Divisors and Rational Solutions of the KP Equation}}

\author{{Daniele Agostini, T\"urk\"u \"Ozl\"um \c{C}elik,  and John B. Little}}

\date{}

\begin{document}
	
	\maketitle
	
	\begin{abstract}
		In this paper we classify the singular curves whose theta divisors in their generalized Jacobians are algebraic, meaning that they are cut out by polynomial analogs of theta functions. We also determine the degree of an algebraic theta divisor in terms of the singularities of the curve. Furthermore, we show a precise relation between such algebraic theta functions and the corresponding tau functions for the KP hierarchy.

	\end{abstract}
	
	\section{Introduction}
	
	The theta divisor associated to a smooth projective curve $C$ is a fundamental object, governing much of the geometry of the curve. This was already recognized by Riemann himself, and afterwards there have been countless studies on the relationship between the curve and the theta divisor.   The classic text \cite[Chapter VI]{ACGH} (also see the exercises there and the bibliography) provides an excellent overview, and \cite{GrHu} is a more recent reference. 
	
	Theta divisors also have an important role in mathematical physics. Indeed, this divisor is cut out in the Jacobian by a Riemann theta function $\theta(\mathbf{z})$, which can in turn be used to construct solutions to the  Kadomtsev-Petviashvili (KP) equation
	\begin{equation}\label{eq:kp} 
	\frac{\partial}{\partial x}\left(  4f_t - 6ff_x - f_{xxx} \right) = 3f_{yy} 
	\end{equation}
	that describes the evolution of waves in shallow water. 
	
	Finally, the theta divisor is also an example of a double translation hypersurface: this is a hypersurface with two distinct parametrizations as Minkowski sums of curves.  Their study goes back to Sophus Lie,  and modern treatments can be found in~\cite{Little} and in \cite{AgoCelStrStu}. 
	
	The reason behind the many properties of the theta divisor is that it can be parametrized in terms of the Abel-Jacobi map of the curve. Indeed, if we fix a point $P_0 \in C$ and a basis $\boldsymbol{\omega} = (\omega_1,\dots,\omega_g)$ of canonical differentials for $H^0(C,\omega_C)$, then the image of the map
	\begin{equation}\label{eq:abeljacobig-1}
	C^{(g-1)} \to \operatorname{Jac}(C) = H^0(C,\omega_C)^*/H_1(C,\mathbb{Z}), \qquad P_1+\dots+P_{g-1} \mapsto \sum_{i=1}^{g-1} \int_{P_0}^{P_i} \boldsymbol{\omega}  
	\end{equation}
	is exactly the theta divisor. 
	This parametrization can actually be extended to any reasonably singular curve $C$. More precisely, if $C$ is a Gorenstein, reduced and connected projective curve of arithmetic genus $g$, then one can define its generalized Jacobian $\operatorname{Jac}(C)$ and then construct an Abel-Jacobi map as in \eqref{eq:abeljacobig-1}. The (closure of the) image of this map is again a divisor in the generalized Jacobian which is  called the theta divisor of the singular curve $C$. This lifts to a hypersurface in the complex vector space $H^0(C,\omega_C)^*$, which is cut out by an analytic equation $\theta(\mathbf{z})$
	which we call the theta function associated to the curve $C$.
	
	Such theta divisors retain many properties seen in the smooth curve case. For example, the theta function again gives solutions to the KP equation, and a theorem of Lie and Wirtinger \cite{W} states that any double translation hypersurface arises from a possibly singular curve in this way.
	
	Roughly speaking, the more singular the underlying curve is, the less transcendental the corresponding theta divisor is.  At the far end of the spectrum from the hypersurfaces defined by Riemann theta functions, there are cases where the theta divisor of a singular curve is even an \emph{algebraic hypersurface}. This means that the generalized Jacobian coincides with the complex vector space $H^0(C,\omega_C)^*$ and that the theta divisor inside it is cut out by a \emph{polynomial} theta function $\theta(\mathbf{z})$.  In the case $g = 3$ there was a significant amount of interest classically in understanding all types of theta surfaces, that is the theta divisors corresponding to singular plane quartic curves. The early 20th century mathematician John Eiesland, in particular, made an extensive study of the theta divisors from rational plane quartic curves
	in \cite{Eies09}.   He also gave a complete classification of the types of theta divisors that are algebraic in \cite{Eies08}.  Some of Eiesland's examples are reconsidered in \cite{AgoCelStrStu}.  
	In this paper, we will continue this study of algebraic theta divisors and theta functions in all dimensions.
	
	In particular, in Proposition \ref{AlgCharac}, we classify  all irreducible and reduced Gorenstein curves with an algebraic theta divisor. These are exactly those curves that are rational and all whose singularities are unibranch or analytically irreducible, meaning that they correspond to a unique point on the normalization of the curve. In particular, to each singular point we can associate its value semigroup and a corresponding partition $\lambda_{P}$, see Section  \ref{sec:weierstrassgap}. One of our main results is the computation of the degree of the theta divisor in terms of these data.
	
	\begin{theorem}\label{theoremDeg} 
		Let $C$ be an integral Gorenstein rational curve of arithmetic genus $g$, all of whose singular points of $C$ are unibranch. For each singular point $P$, let $\lambda_P$ be the corresponding partition. Then the degree of the corresponding algebraic theta function  is 
		\[ \sum_{P\in \operatorname{Sing}(C)} |\lambda_P|.\]
		In particular, this is less than or equal to $g(g+1)/2$ and this upper bound is attained if and only if the curve has exactly one unibranch singular point $P$ whose semigroup is 
		$\mathbb{S}_g =\langle 2,2g+1 \rangle$.
	\end{theorem}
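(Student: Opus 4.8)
The plan is to read off the degree from the relation between the algebraic theta function and the KP tau function. By Proposition~\ref{AlgCharac} the curve $C$ is rational with only unibranch singular points, so its normalization is $\nu:\mathbb{P}^1\to C$; fix an affine coordinate $t$ on $\mathbb{P}^1$ so that the singular points $P_1,\dots,P_r$ lie over distinct finite values $t=a_1,\dots,a_r$ with $\infty$ a smooth point. A local duality computation shows that $(t-a_i)^{-m}\,dt$ is a section of $\omega_C$ near $P_i$ precisely when $m-1\notin\mathbb{S}_{P_i}$; consequently $H^0(C,\omega_C)$ has a basis adapted to the singular points, and the linear coordinates on $\operatorname{Jac}(C)=H^0(C,\omega_C)^*$ break into blocks $\mathbf{z}^{(i)}=(z^{(i)}_1,\dots,z^{(i)}_{\delta_i})$, with $z^{(i)}_j$ attached to the $j$-th gap $\ell^{(i)}_1<\dots<\ell^{(i)}_{\delta_i}$ of $\mathbb{S}_{P_i}$ and $\sum_i\delta_i=g$. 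Since $\deg\theta$ is unchanged by an invertible linear change of the $\mathbf{z}$, it suffices to determine the top-degree part of $\theta$ in these coordinates.

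I would first treat a single singular point $P$, where the theta--tau relation established above identifies $\theta(\mathbf{z})$, up to a nonzero scalar and an invertible rescaling of the coordinates, with the Schur polynomial $s_{\lambda_P}$ obtained from the KP tau function by setting the non-gap times to $0$ and identifying the gap time $t_{\ell_j}$ with $z_j$. Now $s_{\lambda_P}$ is isobaric of weight $|\lambda_P|$ when $t_k$ carries weight $k$, so after this specialization every monomial $\prod_j z_j^{m_j}$ occurring in $\theta$ satisfies $\sum_j \ell_j m_j=|\lambda_P|$, whence its ordinary degree $\sum_j m_j$ is at most $|\lambda_P|$ because each gap satisfies $\ell_j\ge 1$. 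Equality forces the monomial to involve only the weight-one variable, i.e.\ the coordinate attached to the first gap $\ell_1=1$ (always a gap in positive genus). Indeed the power-sum expansion of $s_{\lambda_P}$ contains $\tfrac{f^{\lambda_P}}{|\lambda_P|!}\,p_1^{|\lambda_P|}$, where $f^{\lambda_P}$ is the number of standard Young tableaux of shape $\lambda_P$; since $p_1=t_1$ is the first-gap time identified with $z_1$, this survives the specialization and contributes a nonzero $z_1^{|\lambda_P|}$ term. Hence $\deg\theta=|\lambda_P|$ in this case.

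For several singular points the same mechanism localizes at each $P_i$: letting the points of a variable divisor in $(\mathbb{P}^1)^{(g-1)}$ collide with the $a_i$ and analyzing the resulting leading order of the Abel--Jacobi map block by block, one finds that the top-degree part of $\theta$ is the product $c\,\prod_{i}\bigl(z^{(i)}_1\bigr)^{|\lambda_{P_i}|}$ of the local leading terms, with $c\neq 0$. I expect this to be the main obstacle: unlike the single-point case, $\theta$ itself is irreducible and is not the product of the local factors --- the lower-order terms encode the global interaction of the branches at the finite points $a_i$ --- so one must verify that these interaction terms have strictly smaller total degree and that no cancellation occurs in the leading product. Granting this, the degree is additive and $\deg\theta=\sum_i |\lambda_{P_i}|=\sum_{P\in\operatorname{Sing}(C)}|\lambda_P|$.

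Finally, the bound and its equality case are purely combinatorial. Writing the gaps of $\mathbb{S}_P$ as $\ell_1<\dots<\ell_{\delta_P}$ and using the definition of $\lambda_P$ from Section~\ref{sec:weierstrassgap}, one has $|\lambda_P|=\sum_j \ell_j-\binom{\delta_P}{2}$. The map $s\mapsto \ell_j-s$ injects the nonzero semigroup elements below $\ell_j$ into the gaps below $\ell_j$, giving $\ell_j\le 2j-1$ and hence $\sum_j\ell_j\le \delta_P^2$, so that $|\lambda_P|\le \binom{\delta_P+1}{2}$, with equality exactly when $\ell_j=2j-1$ for every $j$, that is $\mathbb{S}_P=\langle 2,2\delta_P+1\rangle$. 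Summing over the singular points and using $\binom{a+b+1}{2}-\binom{a+1}{2}-\binom{b+1}{2}=ab>0$ for $a,b\ge 1$ together with $\sum_i\delta_i=g$, we obtain $\sum_P|\lambda_P|\le \binom{g+1}{2}=g(g+1)/2$, with equality forcing a single singular point carrying the semigroup $\mathbb{S}_g=\langle 2,2g+1\rangle$.
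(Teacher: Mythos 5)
Your single-singularity argument and your combinatorial endgame are both sound and close in spirit to the paper's. In particular, your derivation of $\ell_j\le 2j-1$ via the injection $s\mapsto \ell_j-s$ is a nice self-contained replacement for the paper's appeal to \cite{BEL} (that $\lambda_P$ is a subpartition of the triangular partition), and the superadditivity of $\binom{\delta+1}{2}$ correctly pins down the equality case. One small imprecision in the one-point case: for a non-monomial curve with a single unibranch singularity the tau function is not a single Schur--Weierstrass polynomial but a sum $\sum_{\lambda_0\le\lambda'\le\lambda_\infty}\xi_{\lambda'}\sigma_{\lambda'}$ (Proposition \ref{lemma:shapetau}); your weight argument still goes through because every term has weight at most $|\lambda_\infty|$ and the top term $\xi_{\lambda_\infty}\sigma_{\lambda_\infty}$ has nonzero coefficient, but you should say this rather than identify $\theta$ with $s_{\lambda_P}$ outright.

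The genuine gap is exactly where you flag it: the case of several singular points. Saying that the top-degree part of $\theta$ is the product $c\prod_i (z^{(i)}_1)^{|\lambda_{P_i}|}$ by ``letting points collide with the $a_i$ and analyzing block by block'' is the statement of Theorem \ref{thm:degreeprecise}, not a proof of it, and you explicitly write ``granting this'' for the two things that actually need work (that the interaction terms have strictly smaller degree, and that the leading product does not cancel). The paper's resolution is an induction on the number of singular points via a one-parameter degeneration: one constructs a family $C_s$ with $C_1=C$ degenerating at $s=0$ to a nodal union $C_1\cup C_2$, where $C_1$ carries a \emph{monomial} singularity with the semigroup of $P_1$ and $C_2$ carries the remaining singularities. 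Writing the family of theta functions as $\theta'(\mathbf{z},s)=\sum_\ell \theta'_\ell(\mathbf{z})s^\ell$, Lemma \ref{lemma:degeneration} shows $\theta'_0=\theta_1(\mathbf{z}^1)\cdot\theta_2(\mathbf{z}^2,\dots,\mathbf{z}^h)$ (this is where the product structure of the leading term comes from, and where the induction hypothesis is applied to the two factors), and Lemma \ref{lemma:bounddegree} uses a $\CC^*$-action under which $\theta'(\mathbf{z},s)$ is isobaric to prove $\deg_{\mathbf{z}^1}\theta'_\ell\le \deg_{\mathbf{z}^1}\theta'_0-\ell$, which is precisely the ``no larger interaction terms, no cancellation'' statement you need; one then repeats with each $P_j$ placed at infinity. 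Without some argument of this kind (or an equivalent control of the off-diagonal Pl\"ucker coordinates of the combined frame), your proposal establishes only the one-singularity case of the theorem.
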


	Another theme of our work is the connection with the KP equation: the algebraic theta functions arising from curves induce rational solutions to the KP equation see Corollary \ref{cor:rationalsolutionkp}. More generally, we extend the connection to the whole KP hierarchy: this is an infinite family of differential equations whose solutions are encoded by the so-called \emph{tau function}. We show in Proposition \ref{prop:pullbacktheta} how the classical theorem of Abel gives a relation between the tau function and the theta function, and in Theorem \ref{thm:tauthetamoresing} we make this precise in the case of algebraic theta divisors. In particular, when the curve is rational with a unique unibranch singularity, both the tau function and the theta function are polynomial, as shown in Proposition \ref{lemma:shapetau}, and each one can be recovered from the other: see Theorem \ref{thm:tauthetaonesing} and Corollary \ref{cor:thetafromtau}. In particular, this gives an explicit way to compute the algebraic theta function of such a curve and it is also used in the proof of Theorem \ref{theoremDeg} on the degrees of the algebraic theta divisors. It is interesting to note that special cases of the algebraic theta functions we are studying here, have appeared in a number of other works in the literature on PDE, in particular in \cite{BEL}, which treats the case of the so-called $\langle n,s \rangle$-curves. We vastly generalize their result to all rational Gorenstein curves with unibranch singularities.
	
	Our paper is organized as follows: in Section 2, we set up some notation and terminology about singular curves and generalized Jacobians, and we review the construction of the Sato Grassmannian~\cite{Sato}, which we use to describe the tau functions of the KP hierarchy.  In Section 3, we develop  the connection between the theta functions and tau functions in detail.
	In Section 4 we characterize the irreducible singular curves that yield algebraic theta divisors. We will present a number of examples of such algebraic theta divisors in higher dimensions supplementing Eiesland's list in \cite{Eies08}.  The corresponding KP tau functions are also computed.  We next consider the degrees of algebraic theta divisors in Section 5, and we actually prove a stronger version of Theorem \ref{theoremDeg} which describes also the leading term of the algebraic theta function. The proof works by degenerating to a curve with monomial singularities. Finally, in Section 7 we will present a few examples coming from reducible curves.  Here the situation is more complicated because the theta divisor will also be reducible
	(see Example~\ref{GenusFourNonAlgebraic} and Example~\ref{RedEx}).
	We cannot claim to have anything like a complete classification of the algebraic theta divisors for this reason. 
	
	Finally, we should point out that our point of view here is relatively naive in that we will not make use of any of the more sophisticated constructions of a compactified Jacobian~\cite{Kass,Cap}, and we just work on the generalized Jacobian.
	\medskip
	
	\textbf{Acknowledgements:} We thank Bernd Sturmfels for bringing us together and for his continuous encouragement of this project. The second author is supported by Turkish Scientific and Technological Research Council (TÜB\.{I}TAK) -- TÜB\.{I}TAK 2236, project number 1119B362000396.

	\section{Notation and Preliminaries}\label{sec:NotPre}
	
	\subsection{Singular Curves}
	
	We will work entirely over $\CC$.  
	We begin by recalling some facts about singular curves, their dualizing sheaves, their canonical images, and 
	the structure of curve singularities. Our main references will be  \cite{Rosenlicht52,Rosenlicht54,Serre,KleimanMartins}. 
	
	Let $C$ be a reduced and connected curve.  We will write $\nu : \widetilde{C} \to C$ for 
	the normalization. For each point $P\in C$, the integral closure of $\mathcal{O}_{C,P}$ is exactly $(\nu_*\mathcal{O}_{\widetilde{C}})_P$. This is a finite extension and the \emph{delta-invariant} of the point is defined as $\delta_P := \dim_{\mathbb{C}} (\nu_*\mathcal{O}_{\widetilde{C}} )_P/\mathcal{O}_{C,P}$. It is positive if and only if $P$ is a singular point. If the curve $C$ is projective and irreducible of arithmetic genus $g$, this can be computed as $g=g(\widetilde{C})+\sum_{P\in C} \delta_P$. Furthermore, if $\mathfrak{c}_P$ is the conductor of $\mathcal{O}_{C,P}$ in $(\nu_*\mathcal{O}_{\widetilde{C}})_P$ and if we set $d_P:= \dim_{\mathbb{C}} (\nu_*\mathcal{O}_{\widetilde{C}})_P/\mathfrak{c}_P$, then for each singular point $P$, the inequalities  $\delta_P+1\leq d_P \leq 2\delta_{P}$ hold. A singularity is called \emph{Gorenstein} if $d_P=2\delta_P$ and a curve is Gorenstein if so are all its singularities. For example, any curve in a smooth surface is Gorenstein.
	
	The dualizing sheaf $\omega_C$ was mentioned briefly above: its sections on an open subset $U\subseteq C$ can be viewed as meromorphic differentials $\omega$ on $\nu^{-1}(U)$ with the property that 
	\begin{equation}
	\label{RosenlichtDiff}
	\sum_{Q \in \nu^{-1}(P)} {\rm Res}_Q(\nu^*(f)\cdot \omega) = 0 \qquad  \text{ for all } P\in U \text{ and all } f\in {\cal O}_{C,P}.
	\end{equation}
	A curve is Gorenstein if and only if $\omega_C$ is an invertible sheaf on $C$. The global sections $H^0(C,\omega_C)$ are called Rosenlicht or canonical differentials, and the above description implies that they have  no poles at points
	on $\widetilde{C}$ that map to smooth points of $C$.  If the curve $C$ is projective and irreducible of arithmetic genus $g$ then $h^0(C,\omega_C)=g$, while $h^0(C,\omega_C) \geq g$ in general.

	Rosenlicht showed that when $C$ is irreducible, the global sections of $\omega_C$ define
	a base-point-free linear series on $\widetilde{C}$, giving a \emph{canonical mapping} 
	$$\phi : \widetilde{C} \to \mathbb{P}^{g-1}$$
	where $g$ is the arithmetic genus of $C$.  If there is no two-to-one mapping from 
	$C$ to $\mathbb{P}^1$, then $C$ is said to be \emph{non-hyperelliptic}.   The curve $\phi(\widetilde{C})$
	is called the canonical image, or canonical model.  When $C$ is non-hyperelliptic and 
	Gorenstein, that is, when $\omega_C$ is an invertible sheaf, 
	$C$ and $\phi(\widetilde{C})$ are isomorphic.  The curve $\phi(\widetilde{C})$ has degree $2g - 2$ in $\mathbb{P}^{g-1}$, 
	and since it also has arithmetic genus $g$, it has the largest possible genus for a curve of its degree.  (Recall, we are still assuming $C$ is irreducible.)   Conversely, 
	Rosenlicht showed (Corollary to Theorem 16 and Theorem 17 of \cite{Rosenlicht52}) that if $C$ is any integral curve of degree $2g - 2$ in 
	$\mathbb{P}^{g-1}$ having arithmetic genus $g$, then $C$ is non-hyperelliptic and Gorenstein.  In other words, these curves are equal to their own canonical models if a suitable basis for ${\rm H}^0(C,\omega_C)$ is used to construct the canonical mapping.  
	
	\begin{example}
		\label{Monomial456}
		Consider the parametrized rational curve $C$ which is the  
		image of 
		\begin{align*}
		\nu : \mathbb{P}^1 &\longrightarrow \mathbb{P}^3\\
		(u:t) &\longmapsto (u^6: t^4u^2: t^5u : t^6).
		\end{align*}
		$C$ is a curve of degree $6$ with exactly one singular point $Q = (1:0:0:0)$.  The mapping $\nu$ is exactly the normalization of $C$ and we look at $u$ and $t$ as local coordinates on the charts $\{ t\ne 0 \}$ and $\{ u\ne 0 \}$ of $\mathbb{P}^1$ respectively.  
		The ring ${\cal O}_{C,P}$ is the localization of $\CC[t^4, t^5, t^6]$ at $t = 0$.  
		It is easy to check that every 
		power of $t$ other than $t, t^2, t^3, t^7$ is contained in ${\cal O}_{C,P}$. On the other hand $(\nu_*\mathcal{O}_{\mathbb{P}^1})_P$ 
		is equal to the localization of $\CC[t]$ at $t = 0$.  Hence $\delta_P= 4$.  The conductor here is ${\mathfrak c}_P = \langle t^8\rangle$
		and hence $d_P =  8$.  Since we have $d_P = 2\delta_P$, this singular point is Gorenstein.  The arithmetic genus of $C$ is $g = g(\mathbb{P}^1) + \delta_P = 4$.  Since $C$ has degree 
		$6 = 2\cdot 4 - 2$ in $\mathbb{P}^3$, this is an example of the curves isomorphic to their own canonical models discussed above.  This can be 
		checked by computing a basis for ${\rm H}^0(C,\omega_C)$ as $\left( du,u\,du,u^2du,u^6du \right)$.  $\triangle$
	\end{example}
	
	A singularity $P \in C$ whose set-theoretic preimage $\nu^{-1}(P)$ consists of a single point $\widetilde{P}\in \widetilde{C}$, such as in Example \ref{Monomial456}, is called \emph{unibranch} or \emph{analytically irreducible}.
	
	For unibranch singular points, the Gorenstein property is also equivalent to a \emph{symmetry property} of the so-called \emph{value semigroup}. This is the semigroup $\mathbb{H}_P\subseteq \mathbb{N}$ generated by the orders of vanishing of the elements $\nu^*f$ for $f\in \mathcal{O}_{C,P}$ at the point $\widetilde{P}\in \widetilde{C}$. The set of \emph{gaps} is  $\mathbb{W}_P:=\mathbb{N}\setminus \mathbb{H}_P$. One can observe that a unibranch singular point $P$ is Gorenstein if and only if 
	the value semigroup $\mathbb{H}_P$ satisfies 
	\begin{equation}
	\label{symmetry}
	w \notin \mathbb{H}_P \Leftrightarrow 2\delta_p - 1 - w \in \mathbb{H}_P.
	\end{equation}
	The original reference for this is \cite{Kunz}.  For instance, in Example~\ref{Monomial456}, the semigroup in question is 
	$\mathbb{H}_P = \langle 4,5,6\rangle  = \{0,4,5,6,8,\cdots\} \subset \mathbb{N}.$ 
	The set of gaps is $\mathbb{W}_P=\{1,2,3,7\}$.  Note
	that $w$ is a gap if and only if $7 - w \in \mathbb{H}_P$, so the symmetry property 
	\eqref{symmetry} holds.
	
	\subsection{Weierstrass gap sequences}\label{sec:weierstrassgap}
	
	Assume now that the curve $C$ is irreducible, Gorenstein and has only unibranch singularities. To any smooth point $P$ one associates a Weierstrass gap sequence $\mathbb{W}_P = \{ w_1 < w_2 < \dots < w_g \}$ in $\mathbb{N}$ as the set of natural numbers $k$ such that $h^0(C,\mathcal{O}_C(kP)) = h^0(C,\mathcal{O}_C((k-1)P))$, or, equivalently, $h^0(C,\omega_C(-kP)) = h^0(C,\omega_C(-(k-1)P))-1$. The sequence consists exactly of $g=g(C)$ elements and furthermore $1\leq w_1$ and $w_g\leq 2g-1$. To such a sequence we can associate a partition $\lambda = (\lambda_1,\dots,\lambda_g)$ with exactly $g$ parts by setting 
	\begin{equation*} \lambda := (w_g,w_{g-1},\dots,w_2,w_1) - (g-1,g-2,\dots,1,0), 
	\end{equation*}
	where the subtraction is performed componentwise.
	
	If instead $P$ is a singular point of $C$, we can associate to it a gap sequence as $\mathbb{W}_P = \mathbb{N}\setminus \mathbb{H}_P$, where $\mathbb{H}_P$ is the value semigroup of the singularity. There are exactly $\delta = \delta_P$ gaps $w_1<w_2<\dots<w_{\delta}$ and the symmetry condition \eqref{symmetry} implies $w_1 = 1$ and $w_\delta = 2\delta - 1$. We can also associate to this a partition $\lambda = (\lambda_1,\dots,\lambda_\delta)$ with exactly $\delta$ parts  by taking
	\begin{equation}\label{partition} \lambda := (w_\delta,w_{\delta-1},\dots,w_2,w_1) - (\delta-1,\delta-2,\dots,1,0). \end{equation} It was proven in \cite{BEL} that such a partition coming from a Gorenstein unibranch singularity is always symmetric and a subpartition of the triangular partition $(\delta,\delta-1,\dots,1)$. We note that these statements are proved in \cite{BEL} for the semigroups generated by two relatively prime integers $n,s$: $\mathbb{H}_{n,s} = \langle n, s\rangle$.  These curves are called $(n,s)$-curves in \cite{BEL}. But the arguments only make use of \eqref{symmetry}, hence all the statements above follow for the semigroup of any Gorenstein singular point, not just planar singular points.  
	
	\begin{example}
		Consider the curve $C$ of Example \ref{Monomial456}. It is easy to see that the Weierstrass partition associated to the singular point $P$ is $(4,1,1,1)$. 	On the other hand we can also compute the Weierstrass partition associated to the smooth point $P_0=(0:0:0:1)$ as $(4,1,1,1)$.
	\end{example}
	
	\subsection{Generalized Jacobians, Abel maps, and theta functions}
	
	Let now $C$ be projective and let $C_0=C\setminus \textrm{Sing} (C)$ be the smooth locus of $C$. The \emph{generalized Jacobian} of $C$, denoted $\textrm{Jac}(C)$, is defined as the quotient of the group of the Cartier divisors on $C_0$ of degree zero on each component, by the group of divisors of meromorphic functions on $C$. On the other hand, we have a map $
	{\rm H}_1(C_0,\mathbb{Z}) \rightarrow  {\rm H}^0(C,\omega_C)^*$
	induced by integrating the canonical differentials over 1-chains. The image of this map is a discrete subgroup of rank at most $2\dim {\rm H}^0(C,\omega_C)$ and we denote it $\Lambda_C$. The \emph{Abel map} is defined by choosing a nonsingular point $P_i$ on each irreducible component $C_i$ of $C$ as follows:
	\begin{equation}
	\label{AbelMap}
	\alpha : C_0\longrightarrow {\rm H}^0(C,\omega_C)^*/\Lambda_C, \quad P \longmapsto \left(\omega \mapsto \int_{P_i}^P\omega \right) \quad \text{ for } P\in C_i.
	\end{equation}
	By additivity, this can be extended to a map $\alpha^{(n)}\colon C_0^{(n)} \to H^0(C,\omega_C)^*/\Lambda_C$ from the symmetric product of $C_0$, that is, the set of effective divisors on $C_0$ and then Abel's Theorem asserts that this establishes an isomorphism $\operatorname{Jac}(C) \cong {\rm H}^0(C,\omega_C)^*/\Lambda_C$. 
	
	We now assume that $C$ is Gorenstein of genus $g$. Then $H^0(C,\omega_C)$ is a complex vector space of dimension $g$ and  $\Lambda_C$ is a sublattice of rank at most $2g$. We set 
	$$W_{g-1} = \alpha^{(g-1)}(C_0^{(g-1)}) \subseteq \operatorname{Jac}(C):$$ the Zariski closure $\overline{W}_{g-1}$ in $\operatorname{Jac}(C)$ is an irreducible divisor in $\operatorname{Jac}(C)$, that we call the \emph{theta divisor}. The theta divisor corresponds to an analytic hypersurface in $H^0(C,\omega_C)^*$, which can be described by an analytic equation after choosing a basis of this space:
	\begin{equation}\label{eq:theta} 
	\theta(z_1,\dots,z_g) = 0. 
	\end{equation}
	This is called the \emph{theta function} of the curve $C$.
	
	\begin{remark}\label{rmk:princpol}
		The theta divisor $W_{g-1}$ in the Jacobian of a smooth curve defines a principal polarization on $\operatorname{Jac}(C)$.  This implies that if $x\in \operatorname{Jac}(C)$, then $W_{g-1}+x = W_{g-1}$ if and only if $x=0$ in $J(C)$.  A corresponding statement for the  $\overline{W_{g-1}}$ of a singular curve can be proven as for smooth curves, since the only ingredients needed are Abel's theorem and Riemann-Roch. In particular, this means that the corresponding analytic hypersurface in $H^0(C,\omega_C)^*$ is translation-invariant only with respect to the lattice $\Lambda_C$, and nothing more.
	\end{remark}

	\begin{remark}\label{rmk:sequencejacobians}
		We record here some facts on the structure of the generalized Jacobian, following \cite[Section 9.2]{BLR}.	There is a partial normalization $C'$ of $C$ obtained by identifying all points of $\widetilde{C}$ lying over the singular points of $C$ in such a way that the resulting curve has all singularities locally isomorphic to the union of coordinate axes in a certain affine space. There are natural maps $\widetilde{C} \longrightarrow C' \longrightarrow C$ which induce inclusions $H^0(\widetilde{C},\omega_{\widetilde{C}}) \subseteq H^0(C',\omega_{C'}) \subseteq H^0(C,\omega_C)$ and surjections $\Lambda_{C} \twoheadrightarrow \Lambda_{C'} \twoheadrightarrow \Lambda_{\widetilde{C}}$, so that we have surjective morphisms of algebraic groups
		\begin{equation}\label{eq:mapsjacobians}
		\operatorname{Jac}(C) \twoheadrightarrow \operatorname{Jac}(C') \twoheadrightarrow \operatorname{Jac}(\widetilde{C}).
		\end{equation}
		The kernel of the first map is a unipotent group, and the kernel of the second map is an algebraic torus, whereas $\operatorname{Jac}(\widetilde{C})$ is an abelian variety.  When $C$ is irreducible with only unibranch singularities, for instance, this means $C'$ and $\widetilde{C}$ are isomorphic. 
	\end{remark}
	
	\subsection{
		The Sato Grassmannian}  
	
	The KP hierarchy is a family of partial differential equations which generalizes the KP equation. All solutions of these equations can be described via the Sato Grassmannian~\cite{Sato}, which we recall now. First, the \emph{elementary Schur-Weierstrass polynomials} $\sigma_i({\bf x})$, are defined by the generating series
	\begin{equation}\label{eq:elementarySW} 
	\exp\left( \sum_{i=1}^{\infty} x_i t^i \right) = \sum_{i=0}^{\infty} \sigma_i({\bf x})t^i.
	\end{equation}
	They are polynomials in the infinitely many variables ${\bf x}=(x_1,x_2,x_3,\dots)$.  Then, to any partition $\lambda = (\lambda_1,\lambda_2,\dots,\lambda_m)$ with $\lambda_1\geq \lambda_2 \geq \dots \geq \lambda_m>0$, we can associate a \emph{Schur-Weierstrass polynomial} by
	\begin{equation}\label{eq:SW} 
	\sigma_{\lambda}({\bf x}) = \det \left(  \sigma_{\lambda_i+j-i}\right({\bf x}))_{1\leq i,j \leq m}. 
	\end{equation}
	If we give weight $i$ to $x_i$, then each $\sigma_{\lambda}$ is homogeneous of weight $|\lambda|:=\sum \lambda_i$. 
	These polynomials are related to the usual symmetric Schur polynomials as follows: take $n$ variables $u_1,\dots,u_n$ and consider the power-sums $p_i = u_1^{i}+\dots+u_n^i$, then
	\begin{equation}\label{eq:SWtoS} 
	\sigma_{\lambda}\left( p_1,\frac{1}{2}p_2,\frac{1}{3}p_3,\dots \right) = s_{\lambda}(u_1,\dots,u_n), 
	\end{equation}
	where $s_{\lambda}$ is the usual symmetric Schur polynomial. 
	
	Now we discuss the Sato Grassmannian. Let $V=\mathbb{C}(\!(u)\!)$ be the field of formal Laurent series with complex coefficients.
	Consider the natural projection map $\, \pi\colon V \to \mathbb{C}[u^{-1}] \,$
	onto the polynomial ring in $u^{-1}$. We regard $V$ and $ \mathbb{C}[u^{-1}] $ as
	$\mathbb{C}$-vector spaces, with Laurent monomials $u^i$ serving as elements of a basis.
	Points in the Sato Grassmannian {\rm SGM} correspond to
	$\mathbb{C}$-subspaces $U\subset V$ such that
	\begin{equation}
	\label{eq:kernelcokernel}
	\dim \operatorname{Ker} \pi_{|U} \,\,= \,\, \dim \operatorname{Coker } \pi_{|U} ,
	\end{equation}	
	and this common dimension is finite.
	We can represent $U \in {\rm SGM}$ via a doubly infinite matrix as follows. For any basis $(f_1,f_2,f_3,\dots)$ of $U$,
	the $j$th basis vector is a Laurent series,
	\[
	f_j(u) \,\,=\,\, \sum_{i=-\infty}^{+\infty} \xi_{i ,j}u^{i+1}.
	\]
	Then $U$ is the column span of the infinite matrix $\xi = (\xi_{i,j})$ whose rows are indexed from top to bottom by $\ZZ$ and whose
	columns are indexed from right to left by $\NN$. The $i$-th row of $\xi$ corresponds to the coefficients of $u^{i+1}$.
	Sato proved that a subspace $U$ of $V$ satisfies (\ref{eq:kernelcokernel}) 
	if and only if there is a basis, called a \emph{frame} of $U$, whose corresponding matrix has the shape
	
	\begin{equation}
	\label{eq:xishape}
	\xi \,\,=\,\, \begin{small} \begin{pmatrix} \ddots  & \vdots & \vdots & \vdots & \vdots & \cdots & \vdots \\
	\cdots & \mathbf{1} & 0 & 0 & 0 & \cdots & 0 \\ 
	\cdots & * & \mathbf{1} & 0 & 0 & \cdots & 0 \\ 
	\cdots & * & * & \xi_{-\ell,\ell} & \xi_{-\ell,\ell-1} & \cdots & \xi_{-\ell,1} \\ 
	\cdots & * & * & \xi_{-\ell+1,\ell} & \xi_{-\ell+1,\ell-1} & \cdots & \xi_{-\ell+1,1} \\ 
	{}  & \vdots & \vdots & \vdots & \vdots & \cdots & \vdots \\
	\cdots & * & * & \xi_{-1,\ell} & \xi_{-1,\ell-1} & \cdots & \xi_{-1,1} \\
	\cdots & * & * & \xi_{0,\ell} & \xi_{0,\ell-1} & \cdots & \xi_{0,1} \\ 
	\cdots & * & * & \xi_{1,\ell} & \xi_{1,\ell-1} & \cdots & \xi_{1,1} \\
	{}  & \vdots & \vdots & \vdots & \vdots & \cdots & \vdots 
	\end{pmatrix}. \end{small}
	\end{equation}
	This matrix is infinite vertically, infinite on the left and, most importantly, it is eventually lower triangular with $1$'s on the diagonal, at the $(-n,n)$ positions. 
	The space $U$ is described by the positive integer $\ell$ and the submatrix with
	$\ell$ linearly independent columns whose upper left entry is $\xi_{-\ell,\ell}$.
	This description implies that a subspace
	$U$ of $V$ satisfies (\ref{eq:kernelcokernel}) if and only~if
	\begin{equation}
	\label{eq:kernelcokernel2}
	\text{there exists $\ell \in \mathbb{N}$ \, such that} \quad
	\dim U\cap V_{n} \,\,=\,\, n+1 \quad \text{ for all } n\geq \ell,
	\end{equation}
	where $V_n = u^{-n}\mathbb{C}[\![u ]\!]$ denotes the space of Laurent series with a pole of order at most $n$.
	
	The \emph{Pl\"ucker coordinates} on SGM are computed as minors
	$\xi_\lambda$ of the matrix $\xi$.
	Think of a partition $\lambda$ as a
	weakly decreasing sequence of nonnegative integers
	that are eventually zero. 
	Setting $m_i = \lambda_i - i$ for $i \in \NN$,
	we obtain the  associated {\em Maya diagram}  $(m_1,m_2,m_3,\dots)$. This is a vector of strictly decreasing integers $m_1 > m_2 >  \dots$ such that $m_i = -i$ for large enough $i$.
	Partitions and Maya diagrams are in natural bijection.
	Given any partition $\lambda$, we consider the
	matrix $(\xi_{m_i,j})_{i,j\geq 1}$ whose row indices $m_1,m_2,m_3,\dots$ are the entries
	in the Maya diagram of $\lambda$.
	Thanks to the shape of the matrix $\xi$, it makes sense to take the determinant
	\begin{equation}
	\label{eq:SGMpara}
	\xi_{\lambda} \,\,:= \,\, \det(\xi_{m_i,j}). 
	\end{equation}
	This Pl\"ucker coordinate is a scalar in $\mathbb{C}$ that can be
	computed as a maximal minor of the finite matrix to the lower right of $\xi_{-\ell,\ell}$ in (\ref{eq:xishape}). These Pl\"ucker coordinates can be used to define a tau function
	\begin{equation}
	\label{eq:tau} 
	\tau({\bf x}) = \sum_{\lambda} \xi_{\lambda} \cdot \sigma_{\lambda}({\bf x}) 
	\end{equation} 
	where $\sigma_{\lambda}({\bf x})$ is the Schur-Weiestrass polynomial corresponding to the partition $\lambda$. A fundamental result of Sato and Segal-Wilson~\cite{Sato, SegWil} is that the  function $\tau({\bf x})$  is a solution to the KP-hierarchy and moreover every such solution arises in this way. In particular, by setting
	\[ f(x,y,t) := 2 \frac{\partial^2}{\partial x^2} \log \tau(x,y,t,0,0,\dots) \]
	we obtain solutions to the KP equation.

	\section{Theta functions and tau functions}\label{sec:thetatau}

	In this section, we connect the theta function for an algebraic curve with a tau function for the KP hierarchy. The key point is Krichever's construction~\cite{Kri}, which associates points in the Sato Grassmannian to algebraic curves. Let $C$ be a reduced and irreducible Gorenstein projective curve of arithmetic genus $g$, and let $C_0$ be its smooth locus. We consider a line bundle of degree $g-1$ of the form $L=\omega_C(-D)$, where $D$ is a Cartier divisor of degree $g-1$ supported on $C_0$. 
	We fix a smooth point $P_0\in C$ and we consider the space
	\[ H^0(C,L(\infty P_0)) = \bigcup_{n\geq 0} H^0(C,L(nP_0)) \]
	of differentials with poles described by $D$ and of arbitrary order at $P_0$. We also fix a local coordinate $u$ around $P_0$.  Then we can uniquely write any element of $H^0(C,L(\infty P_0))$ as $\omega = f(u)du$, where $f(u)$ is a Laurent series, and we can define the map
	\[  \iota \colon H^0(C,L(\infty P_0)) \longrightarrow V=\mathbb{C}(\!(u)\!), \qquad \omega = f(u)du \mapsto u^{1-m}f(u) \]
	where $m=\operatorname{mult}_{P_0} D$ is the multiplicity of $D$ at $P_0$. Then the following statement is known~\cite[Section 6]{SegWil}.
	
	\begin{proposition}
		The image $U=\iota(H^0(C,L(\infty P_0)))$ belongs to the Sato Grassmannian. 
	\end{proposition}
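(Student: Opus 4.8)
The plan is to verify the equivalent membership criterion \eqref{eq:kernelcokernel2}: I will produce an $\ell$ such that $\dim(U\cap V_n) = n+1$ for all $n\geq \ell$, and the natural candidate is $\ell = g-1$. The whole argument rests on matching the finite-dimensional pieces $H^0(C,L(nP_0))$ of the union defining $U$ with the intersections $U\cap V_{n-1}$, and then reading off their dimensions from Riemann--Roch and Serre duality, both of which are available because $C$ is Gorenstein.

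First I would record the local behaviour of $\iota$. Writing $\omega = f(u)\,du$ for a section of $L(nP_0) = \omega_C(-D+nP_0)$, the defining conditions of this line bundle say precisely that $\operatorname{ord}_{P_0}(\omega)\geq m-n$ (with $m = \operatorname{mult}_{P_0}D$), together with the vanishing prescribed by $D$ away from $P_0$ and Rosenlicht holomorphy elsewhere. Since $u$ is a uniformizer, $\operatorname{ord}_{P_0}(\omega) = \operatorname{ord}_u(f)$, so $\operatorname{ord}_u(\iota(\omega)) = \operatorname{ord}_u(u^{1-m}f)\geq 1-n$, which is exactly the statement $\iota(\omega)\in V_{n-1}$. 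Conversely, an element of $U$ lying in $V_{n-1}$ is $\iota(\omega)$ for a differential $\omega\in H^0(C,L(\infty P_0))$ whose order at $P_0$ is then at least $m-n$; since all conditions away from $P_0$ are already satisfied, this forces $\omega\in H^0(C,L(nP_0))$. Thus, setting $U_n := \iota(H^0(C,L(nP_0)))$, I obtain the clean identity $U\cap V_{n-1} = U_n$. Because $\iota$ is injective (a global differential on an irreducible curve vanishing to infinite order at $P_0$ is zero), $\dim U_n = h^0(C,L(nP_0))$.

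It then remains to compute these dimensions. The line bundle $L(nP_0)$ has degree $\deg\omega_C - \deg D + n = g-1+n$, so Serre duality gives $h^1(L(nP_0)) = h^0(\omega_C\otimes L(nP_0)^{-1}) = h^0(\mathcal{O}_C(D-nP_0))$, which vanishes once $\deg(D-nP_0) = g-1-n < 0$, i.e. for $n\geq g$. Riemann--Roch then yields $h^0(C,L(nP_0)) = (g-1+n) - g + 1 = n$ for all $n\geq g$. Combining this with the identity of the previous step, $\dim(U\cap V_{n-1}) = n$, that is $\dim(U\cap V_k) = k+1$ for all $k\geq g-1$, which is precisely the condition \eqref{eq:kernelcokernel2} with $\ell = g-1$.

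I expect the only delicate point to be the bookkeeping in the second step: correctly translating the vanishing and pole conditions of the twisted dualizing sheaf $\omega_C(-D+nP_0)$ into the single order inequality $\operatorname{ord}_{P_0}(\omega)\geq m-n$, getting the normalizing shift $u^{1-m}$ right so that the pole filtration on differentials matches the spaces $V_{n-1}$, and verifying the identity $U\cap V_{n-1} = U_n$ with no off-by-one error. Once that identification is in place, the conclusion is a direct application of Riemann--Roch and Serre duality for the Gorenstein curve $C$.
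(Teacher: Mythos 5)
Your argument is correct. Note that the paper does not actually prove this proposition at all: it simply cites it as known from Segal--Wilson \cite[Section 6]{SegWil}, so you have supplied a self-contained proof where the paper offers only a reference. Your route is the standard one underlying the Krichever construction, and every step checks out: the identification $U\cap V_{n-1}=\iota\bigl(H^0(C,L(nP_0))\bigr)$ is exactly right once one unwinds the shift $u^{1-m}$ (a section of $\omega_C(-D+nP_0)$ has $\operatorname{ord}_{P_0}\omega\geq m-n$, hence $\operatorname{ord}_u\iota(\omega)\geq 1-n$, and conversely); injectivity of $\iota$ holds because $L(nP_0)$ is invertible ($C$ Gorenstein, $D$ and $P_0$ supported on the smooth locus) and a nonzero section of a line bundle on an integral curve cannot vanish to infinite order at a point; and Riemann--Roch together with Serre duality for the invertible dualizing sheaf gives $h^0(L(nP_0))=n$ for $n\geq g$, since $\deg(D-nP_0)=g-1-n<0$ kills $h^1$. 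This yields \eqref{eq:kernelcokernel2} with $\ell=g-1$, which also explains the shape of the frame recorded in Remark \ref{rmk:framecurve} (a basis of $H^0(C,L(gP_0))$ of size $g$, then one new pole order at a time). The one point worth making explicit if you write this up is that you are invoking Riemann--Roch and Serre duality in their form for integral Gorenstein projective curves, i.e. $\chi(M)=\deg M+1-g$ and $h^1(M)=h^0(\omega_C\otimes M^{-1})$ for invertible $M$, which is legitimate here but is a slightly less elementary input than in the smooth case.
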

	
	
	\begin{remark}\label{rmk:framecurve}
		In particular, one can find a frame of $U= \iota(H^0(C,L(\infty P_0)))$ as $\iota(\omega'_1),\dots,\iota(\omega'_{g}),\iota(\omega'_{g+1}),\dots$ , where $\omega'_1,\dots,\omega'_g$ are a basis of $H^0(C,L(gP_0))$ and $\omega'_{g+i}$ has a pole of order $g+i-m$ at $P_0$ for all $i\geq 1$ and the other poles are bounded by  $D$.
	\end{remark}
	
	At this point we can make connections with the Abel maps considered before. More precisely, let's consider $n\geq 1$ and the non-symmetric Abel map with basepoint at $P_0$
	\[ a_n\colon {C}_0^n \longrightarrow \operatorname{Jac}(C), \qquad (P_1,\dots,P_n) \mapsto \int_{P_0}^{P_1} \boldsymbol{\omega} + \dots + \int_{P_0}^{P_n} \boldsymbol{\omega} \]
	where $\boldsymbol{\omega}=(\omega_1,\dots,\omega_g)^t$. The non-symmetric Abel map is the composition of the usual projection $\pi\colon C_0^n \longrightarrow C_0^{(n)}$ with the symmetric Abel map $\alpha\colon C_0^{(n)} \longrightarrow \operatorname{Jac}(C)$. Let us discuss briefly the coordinates in these spaces: if $u$ is a local coordinate around $P_0$, we have corresponding coordinates $(u_1,\dots,u_n)$ around $(P_0,\dots,P_0)\in C_0^n$. A system of coordinates around the point $nP_0 \in C_0^{(n)}$ is then given by the power sums $(x_1,\dots,x_n)$ where
	\[ x_i = \frac{1}{i}(u_1^i+\dots+u_n^i) \]
	and finally we have the coordinates $(z_1,\dots,z_g)$ on $\operatorname{Jac}(C)$.

	The divisor $W_{g-1}$ is the image $a_{g-1}(C_0^{g-1}) \subseteq \operatorname{Jac}(C)$. Moreover, we can consider also the image of the divisor of degree zero $D-(g-1)P_0$ in $\operatorname{Jac}(C)$ via the Abel map, which we denote by $\alpha(D-(g-1)P_0)$. Then we can consider the translate $W_{g-1}-\alpha(D-(g-1)P_0)$, and by Abel's theorem, the pullback of this divisor along the non-symmetric Abel map is given by  $a_n^*(W_{g-1}-\alpha(D-(g-1)P_0)) $
	\begin{align*} 
	&= \left\{ (P_1,\dots,P_n) \in {C_0}^n \,|\, h^0\Big(C,{\cal O}_C\Big(\sum_i P_i +D-nP_0\Big)\Big) \ne 0  \right\} \\
	& = \left\{ (P_1,\dots,P_n) \in {C_0}^n \,|\, h^0(C,\omega_C\Big(\Big(-D+nP_0-\sum_i P_i\Big)\Big) \ne 0  \right\}\\
	& = \left\{ (P_1,\dots,P_n) \in {C_0}^n \,|\, h^0\Big(C,L\Big(nP_0-\sum_i P_i\Big)\Big)) \ne 0  \right\}.
	\end{align*}
	Now we suppose $n\geq g$ and we focus our attention around the point $(P_0,\dots,P_0) \in C_n$. In particular, this belongs to $a_n^*(W_{g-1}-\alpha(D-(g-1)p))$ if and only if $h^0(C,L)>0$. Take a basis $\omega'_1,\dots,\omega'_n$ of $H^0(C,L(nP_0))$ as in Remark \ref{rmk:framecurve} and expand them as Laurent series around $P_0$ with the coordinate $u$:
	\begin{equation}\label{eq:expomega}
	\omega'_j(u) = \sum_{i=-n}^{\infty} \xi_{ij} u^{i+m} du.
	\end{equation}
	Around the point $(P_0,\dots,P_0)$ we have local coordinates $(u_1,\dots,u_n)$ in $C_0^n$. With this notation, we have the following
	
	\begin{lemma}
		Around the point $(P_0,\dots,P_0)$ and with coordinates $(u_1,\dots,u_n)$, the divisor $a_n^*(W_{g-1}-\alpha(D-(g-1)p))$ has equation
		\begin{equation}\label{eq:deglocusomega} 
		\frac{(u_1\dots  u_n)^{n-m}}{\Delta(u_1,\dots,u_n)}\cdot\det \begin{pmatrix} \omega'_n(u_1) & \omega'_{n-1}(u_1) & \dots & \omega'_2(u_1) & \omega'_1(u_1) \\
		\omega'_n(u_2) & \omega'_{n-1}(u_2) & \dots & \omega'_2(u_2) & \omega'_1(u_2)\\
		\vdots & \vdots & \dots & \vdots & \vdots \\
		\omega'_n(u_n) & \omega'_{n-1}(u_n) & \dots & \omega'_2(u_n) & \omega'_1(u_n)
		\end{pmatrix} = 0,
		\end{equation}
		where $\Delta$ is the Vandermonde determinant $\Delta(u_1,\dots,u_n) = \prod_{1\le i < j\le n} (u_i-u_j)$. 
	\end{lemma}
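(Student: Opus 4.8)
The plan is to pin down the pullback divisor set-theoretically, using the reformulation obtained just above the statement, and then to realize the condition $h^0 \neq 0$ as the vanishing of a determinant written in the coordinates $(u_1,\dots,u_n)$. The computation preceding the lemma shows that near $(P_0,\dots,P_0)$ the divisor $a_n^*(W_{g-1}-\alpha(D-(g-1)P_0))$ is the locus where $h^0(C,L(nP_0-\sum_i P_i))\neq 0$. Since $n\geq g$, the bundle $L(nP_0)$ has degree $n+g-1\geq 2g-1$, so Riemann--Roch together with Serre duality on the Gorenstein curve $C$ gives $h^0(C,L(nP_0))=n$, with basis $\omega'_1,\dots,\omega'_n$ as in Remark \ref{rmk:framecurve}. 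A nonzero element of $H^0(C,L(nP_0-\sum_i P_i))$ is exactly a nonzero section $\omega=\sum_j c_j\omega'_j$ of $L(nP_0)$ vanishing along the divisor $P_1+\dots+P_n$; when the $P_k$ are distinct this amounts to $n$ linear conditions on the $c_j$, so $h^0(C,L(nP_0-\sum_i P_i))\neq 0$ if and only if the $n\times n$ evaluation matrix whose $(k,j)$ entry is the value of $\omega'_j$ at $P_k$, in a fixed trivialization of $L(nP_0)$, is singular (the column ordering in \eqref{eq:deglocusomega} affects only the sign).

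Next I would compute this evaluation in coordinates. Because $P_0$ is smooth and $\operatorname{mult}_{P_0}D=m$, the sheaf $L(nP_0)=\omega_C(nP_0-D)$ is trivialized near $P_0$ by $u^{m-n}\,du$, and the expansion \eqref{eq:expomega} shows that in this trivialization $\omega'_j$ becomes the power series $\tilde F_j(u)=\sum_{i\geq -n}\xi_{ij}\,u^{i+n}$, which is regular at $u=0$. Hence the singularity condition reads $\det(\tilde F_j(u_k))=0$. Writing the coefficient function of $\omega'_j$ as $f_j(u)=u^{m-n}\tilde F_j(u)$ and pulling $u_k^{m-n}$ out of the $k$-th row yields $\det(\omega'_j(u_k))=(u_1\cdots u_n)^{m-n}\det(\tilde F_j(u_k))$, so multiplying by $(u_1\cdots u_n)^{n-m}/\Delta$ returns $\det(\tilde F_j(u_k))/\Delta$. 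This already explains the prefactor in \eqref{eq:deglocusomega}: the power $(u_1\cdots u_n)^{n-m}$ is exactly what clears the poles of the $\omega'_j$ at $P_0$ and renders the equation holomorphic.

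The remaining and, I expect, most delicate point is the division by the Vandermonde $\Delta$. The determinant $\det(\tilde F_j(u_k))$ is antisymmetric and vanishes whenever two of the $u_k$ coincide, since then two rows agree; therefore it is divisible by each $u_a-u_b$ and hence by $\Delta$, and the quotient is a holomorphic symmetric function. This symmetry is forced, since $a_n$ factors through the symmetric product $C_0^{(n)}$, so its pullback divisor carries no diagonal component; the vanishing of $\det(\tilde F_j(u_k))$ on the diagonals is merely the artifact of imposing two identical evaluations instead of the correct vanishing-to-higher-order at a colliding point, and passing to $\det(\tilde F_j(u_k))/\Delta$ replaces these by the confluent, Wronskian-type conditions that record multiplicities correctly. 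To conclude I would verify that this quotient is not identically zero: the $\tilde F_j$ are linearly independent power series, since a local relation would force a global one among the $\omega'_j$, so their Wronskian, which is the confluent limit of $\det(\tilde F_j(u_k))/\Delta$, does not vanish identically. Checking finally that the quotient vanishes to order one along the reduced $W_{g-1}$ would identify \eqref{eq:deglocusomega} as the equation of the divisor itself and not merely of its support.
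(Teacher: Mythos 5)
Your argument is genuinely different in structure from the paper's, and it correctly identifies the support of the proposed equation away from the diagonals: the reformulation of the pullback as the locus $h^0(C,L(nP_0-\sum_i P_i))\neq 0$, the count $h^0(C,L(nP_0))=n$, the identification of the prefactor $(u_1\cdots u_n)^{n-m}$ with the change of trivialization of $L(nP_0)$ near $P_0$, and the divisibility of the antisymmetric determinant by $\Delta$ are all fine. But there is a genuine gap, and you have in fact named it yourself in your last sentence without closing it: nothing in your argument shows that $\det(\tilde F_j(u_k))/\Delta$ vanishes to the \emph{correct order} along the pullback divisor, nor that the set-theoretic identification persists on the diagonals where your ``$n$ linear conditions on the $c_j$'' argument breaks down and the determinant vanishes for the trivial reason that two rows coincide. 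The lemma asserts an equality of divisors (it is used later, in Proposition \ref{prop:pullbacktheta} and Theorem \ref{thm:tauthetageneral}, precisely as an identity of local equations up to a unit), so the statement about multiplicities is the actual content, not a final verification one can wave at.

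The paper closes exactly this gap by a different mechanism: it invokes the computation of \cite[Section IV.1]{ACGH} to identify the degeneracy locus $Z=\{h^0(C,L(nP_0-\sum P_i))\neq 0\}$ \emph{as a scheme} with the ramification divisor of the symmetric Abel-type map $\alpha'$, and then uses the chain rule $da'=d\alpha'\circ d\pi$ to write the ramification divisor of the non-symmetric map $a'$ (cut out by the Brill--Noether determinant $\det(f_i(u_j))$) as $\pi^*Z$ plus the ramification divisor of $\pi$ (cut out by $\Delta$). This Riemann--Kempf-type input is what guarantees both the behaviour on the diagonals and the order of vanishing, and it is the one ingredient your proof is missing. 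To repair your version you would either have to import the same scheme-theoretic statement, or carry out the confluent/Wronskian analysis you allude to and separately verify that the quotient vanishes to order one along a general point of the preimage of $W_{g-1}-\alpha(D-(g-1)P_0)$; as written, neither is done.
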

	\begin{proof}
		We focus on a small neighborhood $\mathcal{U}$ of $P_0$ with local coordinate $u$. Then we note that $u^{n-m}\omega_j' = f_j(u)du$, where $f_j(u)$ is a holomorphic function on $\mathcal{U}$. We have the coordinates $(u_1,\dots,u_n)$ on the cartesian product $\mathcal{U}^n$ and the coordinates $(x_1,\dots,x_n)$ on  the symmetric product $\mathcal{U}^{(n)}$, with the projection map $\pi\colon \mathcal{U}^n\to \mathcal{U}^{(n)}$ given by $x_i = \frac{1}{i}(u_1^i+\dots+u_n^i)$. Now let us consider the Abel-like map
		\[ \alpha'\colon \mathcal{U}^{(n)} \longrightarrow \mathbb{C}^n, \qquad P_1+\dots+P_n \mapsto \int_0^{P_1} \mathbf{f}(u)du + \dots + \int_0^{P_n} \mathbf{f}(u)du \]
		where $\mathbf{f} = (f_1(u),\dots,f_n(u))^t$. Then the computations in \cite[Section IV.1]{ACGH} show that $Z=\{ P_1+\dots+P_n \in \mathcal{U}^{(n)} \,|\, h^0(C,L(nP_0-P_1-\dots-P_n)) \}$ is exactly the ramification divisor of the map $\alpha'$, meaning that it coincides, as a scheme, with the locus where the differential $d\alpha'$ is not of maximal rank. The previous computations show that $a_n^*(W_{g-1}-\alpha(D-(g-1)P_0)) = \pi^* Z$. Now consider the non-symmetric Abel map
		\[ a' = (\alpha'\circ \pi)\colon \mathcal{U}^n \longrightarrow \mathbb{C}^n, \qquad (u_1,\dots,u_n) \mapsto \int_0^{u_1}\mathbf{f}(u)du+\dots+\int_0^{u_n} \mathbf{f}(u)du.  \]
		Since $da' = d\alpha'\circ d\pi$, the ramification divisor of $a'$ is given by $\pi^*Z$ plus the ramification divisor of $\pi$. The divisor $\pi^*Z$ is the one we are looking for, and it is easy to see that the ramification divisor of $\pi$ is given by the Vandermonde determinant. To conclude, we observe that the differential of $a'$ is given exactly by the Brill-Noether matrix $M = \left( f_i(u_j) \right)$, and then the previous discussion shows that $\pi^*Z$ has a local equation $(\det M)\cdot (\det d\pi)^{-1} = 0$.
	\end{proof}

	We can rewrite this matrix using the expansion \eqref{eq:expomega} of the $\omega'_j$. We denote by $\xi^{(n)}$ the $\infty \times n$ matrix $\xi^{(n)} = (\xi_{ij})_{i\geq -n,j=1,\dots,n}$. This is the submatrix of the frame \eqref{eq:xishape} obtained by taking the first $n$ columns and all the rows from $-n$ onwards. Then we set $U^{(n)}$ to be the $n\times \infty$ matrix given by
	\begin{equation}
	U^{(n)} = \begin{pmatrix}
	u_1^{-n+m} & u_1^{-n+m+1} & \dots & u_1 & u_1^2 & \dots \\
	u_2^{-n+m} & u_2^{-n+m+1} & \dots & u_2 & u_2^2 & \dots \\
	\vdots & \vdots & \dots & \vdots & \vdots & \dots \\
	u_n^{-n+m} & u_n^{-n+m+1} & \dots & u_n & u_n^2 & \dots \\
	\end{pmatrix} = \left( u_i^{j} \right)_{i=1,\dots,n\,\, j\geq -n}.
	\end{equation}
	With this notation, we can rewrite the equation \eqref{eq:deglocusomega} simply as 
	\[ \frac{(u_1\dots u_n)^{n-m}}{\Delta(u_1,\dots,u_n)}\det(U^{(n)}\cdot \xi^{(n)}) = 0 \]
	and, using the Binet-Cauchy formula for the determinant, this becomes
	\begin{align*}
	\frac{(u_1\dots u_n)^{n-m}}{\Delta(u_1,\dots,u_n)}\det(U^{(n)}\cdot \xi^{(n)}) &=  \sum_{\lambda = (\lambda_1,\dots,\lambda_n)} s_{\lambda}(u_1,\dots,u_n)\cdot \xi_{\lambda}, 
	\end{align*}
	where the sum is over all partitions with at most $n$ parts, the $s_{\lambda}$ is the symmetric Schur polynomial and $\xi_{\lambda}$ is the Pl\"ucker coordinate of $\xi$ indexed by $\lambda$. Observe that this last expression is precisely what we obtain from the $\tau$ function \eqref{eq:tau} under the substitution $x_i = \frac{1}{i}p_i$, where $p_i = p_i(u_1,\dots,u_n)$ is the $i$-th power symmetric polynomial in the $u_i$. Indeed, under this substitution, the Schur-Weierstrass polynomials $\sigma_{\lambda}({\bf x})$ become the symmetric Schur polynomials $s_{\lambda}$, and moreover these are identically zero whenever $\lambda$ has more than $n$ parts, which is the number of variables. In summary, we have the following proposition.
	
	\begin{proposition}\label{prop:pullbacktheta}
		Around the point $(P_0,\dots,P_0)\in C_0^n$ the divisor
		$a_n^*(W_{g-1}-\alpha(D-(g-1)P_0))$ has local equation
		\[  \tau\left(p_1, \frac{1}{2}p_2, \frac{1}{3}p_3, \dots \right) = 0
		, \text{ where } p_i(u_1,\dots,u_n) = u_1^i+\dots+u_n^i. \]
	\end{proposition}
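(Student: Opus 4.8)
The plan is to turn the explicit local equation for the pullback divisor, provided by the preceding Lemma, into the tau function by a Cauchy--Binet expansion. The starting point is that, by the Lemma, the divisor $a_n^*(W_{g-1}-\alpha(D-(g-1)P_0))$ is cut out near $(P_0,\dots,P_0)$ by \eqref{eq:deglocusomega}, i.e.\ by $(u_1\cdots u_n)^{n-m}\Delta^{-1}$ times the Brill--Noether determinant of the $\omega'_j(u_i)$. First I would substitute the frame expansion \eqref{eq:expomega} of each $\omega'_j$ and absorb the factor $(u_1\cdots u_n)^{n-m}$ columnwise, which replaces each $\omega'_j$ by the holomorphic function $f_j(u)=u^{n-m}\omega'_j/du=\sum_{i\geq -n}\xi_{ij}u^{i+n}$ appearing in the Lemma's proof. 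This identifies the rescaled Brill--Noether determinant with $\det\!\big(U^{(n)}\xi^{(n)}\big)$, where $U^{(n)}=(u_i^{\,j})$ records the monomials and $\xi^{(n)}$ is the truncation of the frame matrix to columns $1,\dots,n$ and rows $\geq -n$.

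Next I would apply the Cauchy--Binet formula to $\det(U^{(n)}\xi^{(n)})$. This expands the determinant as a sum over size-$n$ subsets of the column set of $U^{(n)}$, equivalently over partitions $\lambda$ with at most $n$ parts via the Maya-diagram bijection of Section~\ref{sec:NotPre}, of (the maximal minor of $U^{(n)}$ on the columns indexed by $\lambda$) times (the corresponding minor of $\xi^{(n)}$). The minor of $\xi^{(n)}$ is by definition the Pl\"ucker coordinate $\xi_\lambda$ of \eqref{eq:SGMpara}, while the minor of $U^{(n)}$ is a generalized Vandermonde determinant; the bialternant formula identifies it, after division by the ordinary Vandermonde $\Delta(u_1,\dots,u_n)$, with the symmetric Schur polynomial $s_\lambda(u_1,\dots,u_n)$. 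Hence \eqref{eq:deglocusomega} becomes $\sum_\lambda \xi_\lambda\, s_\lambda(u_1,\dots,u_n)$.

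Finally I would invoke \eqref{eq:SWtoS}, which says $\sigma_\lambda(p_1,\tfrac12 p_2,\tfrac13 p_3,\dots)=s_\lambda(u_1,\dots,u_n)$, to rewrite this sum as $\sum_\lambda \xi_\lambda\,\sigma_\lambda(p_1,\tfrac12 p_2,\dots)$, which equals $\tau(p_1,\tfrac12 p_2,\tfrac13 p_3,\dots)$ by the definition \eqref{eq:tau} of the tau function. The one point to check for consistency is that passing from the finite sum (partitions with $\leq n$ parts) to the full tau sum introduces no extra terms: indeed $s_\lambda$, and hence $\sigma_\lambda$ under the substitution $x_i=\tfrac1i p_i$, vanishes identically whenever $\lambda$ has more than $n$ parts, so the two sums agree.

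I expect the main obstacle to be the index bookkeeping rather than any conceptual difficulty: one must line up the row index $i$ (ranging over $\geq -n$) of the frame matrix, the column exponents of $U^{(n)}$, and the Maya-diagram convention $m_i=\lambda_i-i$ used to define $\xi_\lambda$ in \eqref{eq:SGMpara}, so that a single partition $\lambda$ labels the $\xi^{(n)}$-minor, the generalized Vandermonde minor, and the Schur polynomial simultaneously. In particular one must track the two shifts --- by $m=\operatorname{mult}_{P_0}D$ and by $n$ --- to confirm that the prefactor $(u_1\cdots u_n)^{n-m}/\Delta$ is exactly what is needed to convert the raw minors into $s_\lambda$, with no leftover monomial factor. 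Once this alignment is verified, the Cauchy--Binet expansion together with the identity \eqref{eq:SWtoS} yields the claim at once.
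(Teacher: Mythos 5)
Your proposal is correct and follows essentially the same route as the paper: the paper likewise rewrites the Lemma's local equation \eqref{eq:deglocusomega} via the frame expansion as $\tfrac{(u_1\cdots u_n)^{n-m}}{\Delta}\det(U^{(n)}\xi^{(n)})$, applies Binet--Cauchy to obtain $\sum_\lambda \xi_\lambda s_\lambda$, and concludes via \eqref{eq:SWtoS} together with the vanishing of $s_\lambda$ for partitions with more than $n$ parts. The only (harmless) quibble is that absorbing $(u_1\cdots u_n)^{n-m}$ into the Brill--Noether determinant is a row operation rather than a column one.
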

	
	We can use this to relate the tau function of the frame $\iota(H^0(C,L(\infty P_0)))$ with the theta function, i.e. the equation defining the divisor $W_{g-1}$. More precisely, suppose that the theta divisor $W_{g-1}$ is described by the theta function $\theta(z_1,\dots,z_g)$ around the point $(0,0,\dots,0)$, and suppose that  $\alpha(D-(g-1)P_0)$ can be represented with coordinates $\mathbf{b}=(b_1,\dots,b_g)^t$, so that the divisor $W_{g-1}-\alpha(D-(g-1)P_0)$ is described by the function $\theta(\mathbf{z}+\mathbf{b})$. Now we write the Abel map in coordinates: if $\omega_j=\left(\sum_{i=0}^{\infty} a_{ij}u^i \right)du$ are local expansions of the differentials $\omega_1,\dots,\omega_g$, then the coordinates of the Abel map are given by
	\[ z_j = \sum_{h=1}^n \int_{0}^{u_h} \omega_j = \sum_{h=1}^n  \sum_{i=0}^{\infty} a_{ij} \frac{1}{i+1}u_h^{i+1} = \sum_{i=0}^{\infty} a_{ij} \frac{1}{i+1}p_{i+1}, \quad \text{where } p_i=u_1^i+\dots+u_n^i.  \]
	Hence, if we define the  $g\times \infty$ matrix $A=(a_{ji})$, the pullback $a_n^*\left( W_{g-1}-\alpha(D-(g-1)P_0) \right)$ has local equation
	\begin{equation}\label{eq:equationtheta} 
	\theta(A\mathbf{p}+\mathbf{b}) = 0, \quad \text{where } \mathbf{p} = \left(p_1,\frac{1}{2}p_2,\dots\right)^t, \text{ with } p_i = u_1^i+\dots+u_n^i.
	\end{equation}
	However, another local equation is given by the one in Proposition \ref{prop:pullbacktheta}. We can combine these descriptions to obtain the following:
	\begin{theorem}\label{thm:tauthetageneral}
		Let $\tau(\mathbf{x})$ be the tau function corresponding to $H^0(C,L(\infty P_0))$. Then in the infinitely many variables $\mathbf{x} = (x_1,x_2,x_3,\dots)$, there is an identity of formal power series
		\[ \tau(\mathbf{x}) = \exp\left( C(\mathbf{x}) \right)\cdot \theta(A\mathbf{x}+\mathbf{b}) \]
		for a certain $C(\mathbf{x})$. 
	\end{theorem}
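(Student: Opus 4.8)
The plan is to exploit that Proposition~\ref{prop:pullbacktheta} and equation~\eqref{eq:equationtheta} furnish \emph{two} local equations for one and the same divisor $a_n^*(W_{g-1}-\alpha(D-(g-1)P_0))$ in a neighborhood of $(P_0,\dots,P_0)\in C_0^n$. Since $C_0^n$ is smooth, this divisor is locally principal, so any two local equations generate the same principal ideal in the local ring $\mathcal{O}$ of (convergent) power series at $(P_0,\dots,P_0)$, and therefore differ by a unit. Writing $\tau(\mathbf p)$ for the specialization of $\tau$ at $x_i=\tfrac1i p_i$ with $p_i=u_1^i+\dots+u_n^i$ (which, as noted after Proposition~\ref{prop:pullbacktheta}, automatically truncates $\tau$ to partitions with at most $n$ parts), the two equations give
\[ \tau(\mathbf p) \,=\, v_n\cdot \theta(A\mathbf p+\mathbf b) \]
for a unit $v_n\in\mathcal{O}^{\times}$, and this holds for every $n\ge g$.

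First I would pin down the shape of $v_n$. Both $\tau(\mathbf p)$ and $\theta(A\mathbf p+\mathbf b)$ are symmetric in $u_1,\dots,u_n$, since they depend on the $u_i$ only through the power sums $p_i$; hence $v_n$ is an $S_n$-invariant power series. The ring of symmetric power series in $u_1,\dots,u_n$ is freely generated by the power sums $p_1,\dots,p_n$, equivalently by $x_1,\dots,x_n$, so $v_n=V_n(x_1,\dots,x_n)$ for a unique series $V_n$. Being a unit, $V_n(\mathbf 0)\ne 0$, so we may write $V_n=\exp(C_n)$ for a power series $C_n$ in $x_1,\dots,x_n$. This argument is insensitive to whether the base point lies on the divisor: if $h^0(C,L)=0$ both sides are themselves units, and if $h^0(C,L)>0$ both vanish to the same order, but in either case they are associates.

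The last step is to let $n\to\infty$. Setting $u_{n+1}=0$ sends $p_i(u_1,\dots,u_{n+1})$ to $p_i(u_1,\dots,u_n)$ for every $i$, hence carries the level-$(n+1)$ identity to an identity in $u_1,\dots,u_n$; comparing with the level-$n$ identity and cancelling the nonzero factor $\theta(A\mathbf p+\mathbf b)$ in the integral domain of power series gives $v_{n+1}|_{u_{n+1}=0}=v_n$. Thus the $v_n$ form a compatible system under these restriction maps, so they are the images of a single $V(\mathbf x)\in\mathbb{C}[[x_1,x_2,\dots]]$ under the specializations $x_i\mapsto \tfrac1i p_i(u_1,\dots,u_n)$; since $V(\mathbf 0)=v_n(\mathbf 0)\ne 0$ we may set $C(\mathbf x)=\log V(\mathbf x)$ and obtain $V=\exp(C)$. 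Finally, because each $x_i$ carries weight $i$, the weight-$d$ homogeneous component of any element of $\mathbb{C}[[x_1,x_2,\dots]]$ is a polynomial in $x_1,\dots,x_d$, while the power sums $p_1,\dots,p_d$ are algebraically independent in $n\ge d$ variables; hence an identity of formal power series in $\mathbf x$ is detected, weight by weight, by its specializations to finitely many $u_i$. As $\tau(\mathbf x)=\exp(C(\mathbf x))\,\theta(A\mathbf x+\mathbf b)$ holds after every such specialization, it holds in $\mathbb{C}[[x_1,x_2,\dots]]$, which is the claim.

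I expect the genuine content to be concentrated in this last paragraph: the passage from the family of finite-variable identities to a single identity in infinitely many variables. The delicate points are (i) checking that the units $v_n$ are compatible under setting the extra variable to zero, so that they assemble into one power series rather than merely existing level by level, and (ii) justifying that a formal identity in the $x_i$ is faithfully reflected by the power-sum specializations, which rests on the weight grading together with the algebraic independence of the power sums. By contrast, establishing that two local equations differ by a unit, and that a nonvanishing symmetric unit is the exponential of a power series in the $x_i$, is routine.
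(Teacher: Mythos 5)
Your proposal is correct and follows essentially the same route as the paper: two local equations for the pulled-back divisor differ by a nowhere-vanishing symmetric unit, which is rewritten as $\exp(C_n)$ in the variables $x_i=\tfrac1i p_i$, shown compatible under setting the last $u$-variable to zero, and then assembled into a single identity via the faithfulness of power-sum specializations (the paper's Lemma~\ref{lemma:equality}). The only divergence is that you allow $C_n$ to be an arbitrary power series in $x_1,\dots,x_n$, whereas the paper asserts it is affine-linear in the $p_i$; since the theorem only claims existence of \emph{some} $C(\mathbf{x})$, your more general (and arguably more carefully justified) version suffices.
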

	\begin{proof}
		Take a small neighborhood $\mathcal{U}$ of $P_0$ with a local coordinate $u$. For every $n\geq g$, consider the substitution $x_i = \frac{1}{i}p_i$ and set $\tau_n(u_1,\dots,u_n) = \tau\left(p_1,\frac{1}{2}p_2,\dots\right)$ and $\theta_n(u_1,\dots,u_n) = \theta(A\mathbf{p}+\mathbf{b})$. Then Proposition \ref{prop:pullbacktheta} and \eqref{eq:equationtheta}, show that both $\tau_n$ and $\theta_n$ are local equations for the same divisor. Thus, they differ by a function that is everywhere nonzero on $\mathcal{U}^n$, meaning that 
		\[ \tau_n(u_1,\dots,u_n) = \exp\left( C_n(u_1,\dots,u_n) \right) \cdot \theta_n(u_1,\dots,u_n) \]
		for a certain function $C_n(u_1,\dots,u_n)$. Since $\tau_n$ and $\theta_n$ are symmetric, it must be that $C_n$ is symmetric as well, and since the power sums are a basis of symmetric functions, we can write $C_n = c_{n0}+\sum_{i=1}^\infty c_{ni} \frac{1}{i}p_i$ for certain coefficients $c_{ni}$. At this point, we observe that we can assume that $F_n(u_1,\dots,u_n) = F_{n+1}(u_1,\dots,u_n,0)$ since the same holds for $\tau_n$ and $\theta_n$. Hence we see that $c_{ni}=c_{gi}$ for all $n\geq g$. If we now define $C(\mathbf{x}) = c_{g0}+\sum_{i=1}^{\infty} c_{gi}x_i$ then the previous discussion together with Lemma \ref{lemma:equality} proves what we want.
	\end{proof}	
	
	\begin{lemma}\label{lemma:equality}
		Let $F(\mathbf{x})$ be a power series in the infinitely many variables $\mathbf{x}=(x_1,x_2,\dots)$. Then $F(\mathbf{x})=0$ if and only if for all $n$ large enough it holds that $F\left(p_1,\frac{1}{2}p_2,\frac{1}{3}p_3,\dots\right)=0$ where $p_i=u_1^i+\dots+u_n^i$.
	\end{lemma}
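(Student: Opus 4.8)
The plan is to expand $F$ in the basis of Schur--Weierstrass polynomials and then exploit the vanishing behaviour of ordinary Schur polynomials in a bounded number of variables. The forward implication is immediate: if $F(\mathbf{x})=0$ as a formal power series, then the substitution $x_i=\frac{1}{i}p_i$ produces $0$ for every $n$. So the entire content lies in the reverse implication, for which I would assume that $F\left(p_1,\frac{1}{2}p_2,\frac{1}{3}p_3,\dots\right)=0$ for all $n\geq N$ and aim to conclude $F=0$.

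First I would recall that the assignment $x_i\mapsto \frac{1}{i}p_i$, taken in infinitely many variables $u_1,u_2,\dots$, defines an isomorphism of graded rings from $\mathbb{C}[x_1,x_2,\dots]$, with $x_i$ given weight $i$, onto the ring of symmetric functions, and that under this isomorphism the Schur--Weierstrass polynomial $\sigma_{\lambda}$ is carried to the ordinary Schur function $s_{\lambda}$, exactly as in \eqref{eq:SWtoS}. Since the $s_{\lambda}$ form a $\mathbb{C}$-basis of the symmetric functions, the $\sigma_{\lambda}$ form a $\mathbb{C}$-basis of $\mathbb{C}[x_1,x_2,\dots]$, so I may write $F=\sum_{\lambda} a_{\lambda}\sigma_{\lambda}$ uniquely, where for each fixed weight $d$ only the finitely many $\lambda$ with $|\lambda|=d$ contribute.

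Next I would use that the substitution respects the weight grading: because $p_i$ is homogeneous of degree $i$ in the $u_h$, the weight-$d$ part of $F$ maps to a symmetric polynomial homogeneous of degree $d$, and parts of different weight cannot cancel. Hence the vanishing of $F\left(p_1,\frac{1}{2}p_2,\dots\right)$ in $n$ variables forces $\sum_{|\lambda|=d} a_{\lambda}\, s_{\lambda}(u_1,\dots,u_n)=0$ for each $d$ separately. Here the two standard facts about Schur polynomials enter: $s_{\lambda}(u_1,\dots,u_n)=0$ whenever $\lambda$ has more than $n$ parts (write $\ell(\lambda)$ for the number of parts), and the surviving family $\{s_{\lambda}:\ell(\lambda)\leq n\}$ is linearly independent in $\mathbb{C}[u_1,\dots,u_n]$.

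Finally I would fix a weight $d$ and choose $n\geq \max(N,d)$. Every partition of $d$ has at most $d\leq n$ parts, so all the $s_{\lambda}(u_1,\dots,u_n)$ with $|\lambda|=d$ survive and are linearly independent; the vanishing of their combination then forces $a_{\lambda}=0$ for all $|\lambda|=d$. As $d$ was arbitrary, every coefficient vanishes and $F=0$. I do not anticipate a genuine obstacle here; the only point requiring care is the bookkeeping with infinitely many variables, namely arguing weight-by-weight so that the basis expansion is well defined term by term and every manipulation reduces to a finite-dimensional linear statement.
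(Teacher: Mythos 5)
Your proof is correct and follows essentially the same route as the paper: both arguments rest on the observation that the substitution $x_i\mapsto\frac{1}{i}p_i$ preserves the weight grading, so the problem reduces weight by weight to a statement about finitely many variables once $n$ is at least the weight. The only cosmetic difference is how the finite-dimensional step is closed — the paper argues by contradiction on the lowest-weight term and invokes the algebraic independence of $p_1,\dots,p_m$ for $n\geq m$, while you expand each graded piece in the Schur--Weierstrass basis and use linear independence of the $s_\lambda$ with $\ell(\lambda)\leq n$; both are standard and equivalent in effect.
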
	
	\begin{proof}
		The ring of power series is graded according to the weights $\deg x_i = i$. Suppose $F(\mathbf{x})\ne 0$ and let $F_m(\mathbf{x})$ be the term of smallest weight $m$. This is a polynomial in the finite number of variables $x_1,\dots,x_m$. Since the substitution $x_i=\frac{1}{i}p_i$ preserves the weight, the term of smallest degree in $F\left(p_1,\frac{1}{2}p_2,\frac{1}{3}p_3,\dots\right)$ is $F_m\left(p_1,\frac{1}{2}p_2,\frac{1}{3}p_3,\dots \right)$, hence it must be that $F_m\left(p_1,\frac{1}{2}p_2,\frac{1}{3}p_3,\dots \right)=0$ for all $n$  large enough. But taking $n\geq m$ we see that this implies $F_m(\mathbf{x})=0$, which is absurd. 
	\end{proof}


	\section{Algebraic theta divisors}
	
	\subsection{The Characterization}
	
	In this subsection we will provide a characterization of the curves yielding algebraic theta divisors. We start with one example where the divisor is actually algebraic and one where it is not.
	
	\begin{example}
		\label{ThetaDivisorMonomial456}
		Consider the curve $C$ of arithmetic genus $g = 4$ from Example~\ref{Monomial456}. We saw already that a basis of differentials is given by
		$\left( du, u\, du, u^2du,  u^6du\right)$. Since $C$ has a unique singularity, we see that $C_0\cong \mathbb{C}$, so that the homology $H_1(C_0,\mathbb{Z})$ is trivial. Hence $\operatorname{Jac}(C) \cong H^0(C,\omega_C)$ is simply an affine space so that the theta divisor is an algebraic hypersurface. To compute an equation, we use the parametrization:
		\begin{align*}
		z_1 &= \int_0^{u_1} du + \int_0^{u_2} du + \int_0^{u_3} du &
		z_2 &= \int_0^{u_1} u\,du + \int_0^{u_2} u\,du + \int_0^{u_3} u\,du\\
		z_3 &= \int_0^{u_1} u^2du + \int_0^{u_2} u^2du + \int_0^{u_3} u^2du &
		z_4 &= \int_0^{u_1} u^6du + \int_0^{u_2} u^6du + \int_0^{u_3} u^6du.
		\end{align*}
		We can carry out the integration and obtain that $W_{g-1} = W_3$ is the image of the map
		\begin{equation*}
		(u_1,u_2,u_3) \mapsto \left(p_1, \frac{1}{2}p_2, \frac{1}{3}p_3, \frac{1}{7}p_7 \right),
		\end{equation*}
		where the $p_i$ are the usual power sum polynomials.
		Then one can compute explicitly one equation of the theta divisor as
		\[ \theta(z_1,z_2,z_3,z_4) = z_1^7+21z_1^4z_3-84z_1^3z_2^2+252z_1 z_3^2+252 z_2^2 z_3-252z_4. \quad \quad \triangle\]
	\end{example}
	
	On the other hand, here is an example where the theta divisor is not algebraic.
	
	\begin{example}
		\label{nonunibranch}
		Consider the rational quartic plane curve $C$ of arithmetic genus $g = 3$ given as the image of the map 
		\begin{align*}
		\label{rattriple}
		\phi : \mathbb{P}^1 &\longrightarrow \mathbb{P}^2\\
		(u:t) &\longmapsto (t(t^2 - u^2)(t + u) : t^2(t^2 - u^2): u^4).
		\end{align*}
		The curve is rational by construction, but it has a triple point with distinct tangents at $(0:0:1)$, since 
		$\phi((1:-1)) = \phi((1:0)) = \phi((1:1)) = (0:0:1)$. 
		We can find a basis of canonical differentials as 
		\begin{equation*}
		\omega_1 = \frac{1}{1+u} \, du, \,\,\,
		\omega_2 = -\frac{1}{1 - u}\, du, \,\,\,
		\omega_3 = \left( \frac{1}{(1 + u)^2}+ \frac{4}{u^2} +\frac{1}{(1 - u)^2}\right)\ du.
		\end{equation*}
		Integrating these differentials with base point at $u=0$, we see that the hypersurface corresponding to the theta divisor in $H^0(C,\omega_C)$ can be parametrized as:
		\begin{small}
			\begin{align*}
			z_1 &= \int_0^{u_1} \omega_1 + \int_0^{u_2} \omega_1 = \log(1+u_1) +  \log(1 + u_2), \\
			z_2 &=  \int_0^{u_1} \omega_2 + \int_0^{u_2} \omega_2 = \log(1-u_1) + \log(1-u_2), \\
			z_3 &= \int_0^{u_1} \omega_3 + \int_0^{u_2} \omega_3 = -\frac{1}{1+ u_1} - \frac{4}{u_1}  + \frac{1}{1 - u_1}  -\frac{1}{1+ u_2} - \frac{4}{u_2}  + \frac{1}{1 - u_2}+8.
			\end{align*}
		\end{small}
		Then one can compute that $z_3$ is a rational function in $e^{z_1} = (1+u_1)(1+u_2)$ and $e^{z_2} = (1-u_1)(1-u_2)$ and hence
		the theta divisor has an equation of the form 
		$$A(e^{z_1},e^{z_2})z_3 -B(e^{z_1}, e^{z_2}) = 0,$$ where
		$A,B$ are polynomials of two variables.  The exact expressions for $A,B$ can also be derived, but
		it is actually just the form that is more important for us, since such a hypersurface cannot be algebraic, because it is periodic with respect to a discrete lattice of rank 2 in $\CC^3$.  $\triangle$
	\end{example}

	From the patterns seen in these two examples, we can completely characterize when the theta divisor is algebraic.
	
	\begin{proposition}
		\label{AlgCharac}
		Let $C$ be an irreducible and reduced projective Gorenstein curve of arithmetic genus $g$.  The theta divisor of $C$  is algebraic if and only if the curve is rational and all singular points are unibranch.
	\end{proposition}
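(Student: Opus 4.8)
The plan is to route both directions of the equivalence through a single pivot: the theta divisor is algebraic if and only if the period lattice $\Lambda_C$ is trivial. One implication is essentially built into the definition: if the theta divisor is algebraic then $\operatorname{Jac}(C) = H^0(C,\omega_C)^*$, so $\Lambda_C = 0$. If one only assumes that the lifted hypersurface is cut out by a polynomial $\theta$, the same conclusion follows from Remark \ref{rmk:princpol}: a nonzero $v \in \Lambda_C$ would, after comparing top-degree parts, force $\theta(\mathbf z - v) = \theta(\mathbf z)$, and a square-free polynomial periodic in the direction $v$ is independent of that direction, so $\{\theta = 0\}$ would be invariant under the whole line $\mathbb{C}v$, contradicting the exact invariance asserted in that remark. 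Hence algebraicity forces $\Lambda_C = 0$.

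The heart of the argument is the identification $\Lambda_C = 0 \Longleftrightarrow C$ is rational with only unibranch singularities, which I would extract from the structure theory of Remark \ref{rmk:sequencejacobians}. There $\operatorname{Jac}(C)$ surjects onto the abelian variety $\operatorname{Jac}(\widetilde C)$ with kernel assembled from the toric part $T = \ker(\operatorname{Jac}(C') \to \operatorname{Jac}(\widetilde C))$ and the unipotent part $\ker(\operatorname{Jac}(C) \to \operatorname{Jac}(C'))$. Only the abelian and toric factors carry periods, so $\operatorname{rank}\Lambda_C = 2g(\widetilde C) + \dim T$, and therefore $\Lambda_C = 0$ iff $g(\widetilde C) = 0$ and $T = 0$. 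The first condition is precisely rationality of $C$; and $T$ records the identifications made in passing from $\widetilde C$ to $C'$, so $T = 0$ iff $C' = \widetilde C$ iff every singular point has a single preimage, i.e.\ is unibranch (cf.\ the parenthetical remark there). This yields the equivalence at the level of lattices in both directions.

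It remains to promote triviality of $\Lambda_C$ to genuine algebraicity of the divisor in the rational unibranch case. Here I would invoke the Rosenlicht description \eqref{RosenlichtDiff}: at a unibranch point $P$ the fiber $\nu^{-1}(P)$ is a single point $\widetilde P$, so choosing $f = 1$ gives $\operatorname{Res}_{\widetilde P}\omega = 0$ for every canonical differential $\omega$. Since the $\omega_j$ already have no poles at smooth points, they have no residues anywhere on $\widetilde C = \mathbb{P}^1$, and by partial fractions each $\omega_j$ then admits a rational antiderivative. Consequently the Abel map \eqref{AbelMap} is given by rational functions of the coordinate on $\mathbb{P}^1$; after symmetrizing, $W_{g-1}$ is the image of $C_0^{(g-1)}$ under a rational map into $H^0(C,\omega_C)^* \cong \mathbb{C}^g$, whose Zariski closure is an algebraic hypersurface cut out by a polynomial $\theta$, as required.

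The step I expect to be the main obstacle is this last one, namely passing from ``$\operatorname{Jac}(C)$ is an affine space'' to ``the theta divisor is polynomial.'' Triviality of $\Lambda_C$ only makes the Abel integrals single-valued; the decisive input is the vanishing of all residues at the unibranch singularities, which removes the logarithmic terms and makes the parametrization rational (contrast Example \ref{nonunibranch}, where a two-branch point produces genuine logarithms and a rank-two lattice). A slicker alternative I would mention is that a connected commutative algebraic group whose underlying manifold is $\mathbb{C}^g$ is unipotent, so the algebraic Abel morphism automatically has Zariski-closed image; but I prefer the residue computation, since it is explicit and exhibits directly why unibranchness is the decisive hypothesis.
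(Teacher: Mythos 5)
Your proof is correct and follows essentially the same route as the paper: both reduce algebraicity to the vanishing of the period lattice $\Lambda_C$ (ruling out a nontrivial lattice via the exact translation-invariance of Remark \ref{rmk:princpol}), both deduce rationality and unibranchness from the structure of $\operatorname{Jac}(C)$ in Remark \ref{rmk:sequencejacobians} — your rank count $\operatorname{rank}\Lambda_C = 2g(\widetilde{C}) + \dim T$ is just a repackaging of the paper's observation that a unipotent torus must be trivial — and both establish the forward direction by the vanishing of residues at unibranch points via \eqref{RosenlichtDiff}, which makes the Abel integrals rational. The only cosmetic difference is that the paper additionally offers a second, hands-on proof of the converse via partial fractions, which your argument does not need.
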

	\begin{proof}
		First we observe that the theta divisor is algebraic if and only if $\Lambda_C=0$, i.e. $\operatorname{Jac}(C)=H^0(C,\omega_C)^{*}$. Indeed, if this is the case, then the theta divisor is a Zariski-closed subset in the vector space $H^0(C,\omega_C)^*$, hence an algebraic hypersurface. Conversely, suppose that $\Lambda_C\neq 0$. Then the theta divisor cannot be algebraic, otherwise we would have an algebraic hypersurface which is translation-invariant with respect to the discrete lattice $\Lambda_C$ and nothing else (see Remark \ref{rmk:princpol}). But this is impossible, for example because the hypersurface will have infinitely many discrete intersection points with a line.
		
		This being settled, assume first that the curve is rational with all unibranch singularities. Then the normalization of $C$ is $\mathbb{P}^1$ and  \eqref{SatoThetaDivisorMonomial456} shows that all the canonical differentials in $H^0(C,\omega_C)$ are meromorphic differentials on $\mathbb{P}^1$ with no residues at the points corresponding to the singularities of $C$ (these were classically called \emph{differentials of the second kind}). Since $H^1(C_0,\mathbb{Z})$ is generated by small cycles around these points, all canonical differentials integrate to zero on these cycles by the residue formula. Thus $\Lambda_C = 0$ and we are done. Alternatively, one can observe that if the residues of a rational differential $\omega$ on $\mathbb{P}^1$ are all zero, then the expression $\int_{P_0}^{u}\omega_C$ is a rational function of $u$. Indeed, there are no logarithms coming from the residues.
		
		Conversely, suppose that the theta divisor is algebraic. Then $\Lambda_C = 0$, and since this surjects to $\Lambda_{\widetilde{C}} \cong H^1(\widetilde{C},\mathbb{Z})$, this is zero as well. But then $\widetilde{C}$ is a smooth curve of genus zero, meaning $\widetilde{C}\cong \mathbb{P}^1$. Now, since $\Lambda_C \to \Lambda_{C'}$ is also surjective, we see that $\Lambda_{C'}=0$ as well, so that $\operatorname{Jac}(C')\cong H^0(C',\omega_C')$ is a unipotent group, but the discussion in Remark \ref{rmk:sequencejacobians} shows that $\operatorname{Jac}(C')$ is an algebraic torus. Hence, the only possibility is that $H^0(C',\omega_{C'})=0$, so that $C' \cong \mathbb{P}^1$ and the map $\mathbb{P}^1 \to C'$ is an isomorphism. But then by the construction of $C'$ it follows that $C$ has only unibranch singularities.
		
		This second step has another, more concrete proof as follows: From the discussion in 
		\S 2, we may identify $C$ and its canonical image in $\mathbb{P}^{g - 1}$.  
		Suppose on the contrary that there exists a
		singular point $p$ of $C$ such that $k = |\nu^{-1}(p)| \ge 2$ 
		on $\widetilde{C}$.   
		
		We claim first that there must be abelian differentials $\omega \in {\rm H}^0(C,\omega_C)$ such that 
		$\omega$ has nonzero residues at points in $\nu^{-1}(p)$.  For notational convenience, we will only consider the 
		case where $C$ has exactly \emph{one} such singular point and no others, 
		but the argument will generalize to other cases.  We can choose an affine
		coordinate $t$ on $\mathbb{P}^1$ such that $\nu^{-1}(p) = \{a_1,\ldots,a_k \}$ does not contain the point $t = \infty$.
		Hence $\omega$ cannot have a pole at $\infty$ since $\infty$ maps to a smooth point of $C$ by assumption.  
		Then $\omega$ is a rational differential on $\mathbb{P}^1$ of the form
		$$\omega = \frac{g(t)}{h(t)}\ dt$$
		where $\deg(h(t)) = 2g$ and $\deg(g(t)) \le \deg(h(t)) - 2 = 2g - 2$.  Since the only poles 
		of $\omega$ are $t = a_1,\ldots,a_k$,
		the denominator must factor as $h(t) = (t - a_1)^{n_1} \cdots (t - a_k)^{n_k}$ for some $n_i$ with 
		$n_1 + \cdots + n_k = 2g$.   In the partial fraction decomposition
		of the rational function $\frac{g(t)}{h(t)}$, if the residues at $t = a_1, \ldots, a_k$ are all $0$, then we have
		\begin{equation}
		\label{pfracdecomp}
		\frac{g(t)}{h(t)} = \sum_{i = 1}^k \left( \frac{c_{i,2}}{(t - a_i)^2} + \cdots + \frac{c_{i,n_i}}{(t - a_i)^{n_i}}\right ).
		\end{equation}
		Note that there are (only) $\sum (n_i - 1) = 2g - k$ coefficients $c_{i,j}$ here.  Now the equations 
		\eqref{RosenlichtDiff}, taking $f$ equal to each one of a collection of $g - 1$ of affine coordinate functions at $p$,
		yield $g - 1$ additional independent linear relations on the coefficients in \eqref{pfracdecomp}. As a result,  
		the space of solutions of \eqref{RosenlichtDiff} has dimension $\le g - (k - 1) < g$.  Since 
		$\dim {\rm H}^0(C,\omega_C) = g$, this shows that there 
		must be some abelian differentials with nonzero residues at points in $\nu^{-1}(p)$.  
		
		Since there must be $\frac{c}{t - a}$ terms in the partial fraction decomposition of some of the abelian differentials, 
		the Abel map from $C$ to $\CC^g$, and the parametrization of the theta divisor 
		contain logarithmic terms.  This contradicts the assumption that the theta divisor is algebraic, as in 
		Example~\ref{nonunibranch}.
	\end{proof}

	Our next task is to compute explicitly the algebraic theta functions arising from the curves in Proposition \ref{AlgCharac}.
	
	\subsection{Rational unibranch curves with a unique singularity}
	
	Let $C$ be an irreducible and reduced Gorenstein rational curve of genus $g$  with a unique unibranch singularity, and let $\nu\colon \mathbb{P}^1 \to C$ be its normalization. We also fix a smooth point $P_0 \in C$ and we choose an affine coordinate $u$ on $\mathbb{P}^1$ such that $P_0$ corresponds to $u=0$ and the singular point corresponds to $u=\infty$. In particular any canonical differential $\omega\in H^0(C,\omega_C)$ is a rational differential on $\mathbb{P}^1$ with poles only at $u = \infty$, so it is of the form $F(u)du$, with $F$ a polynomial. 
	
	In the notation of Section \ref{sec:thetatau}, we choose the divisor $D = (g-1)P_0$ and the corresponding line bundle $L=\omega_C(-(g-1)P_0)$ of degree $g-1$. Then we have a corresponding point $U=\iota(H^0(C,L(\infty P_0)))$ in the Sato Grassmannian, and thanks to Remark \ref{rmk:framecurve}, we can find a frame from differentials
	\begin{equation}\label{eq:frameonesingularity} 
	\omega_1 = F_1(u)du, \,\dots, \,\omega_g = F_g(u)du, \qquad \omega_{g+i} = \frac{1}{u^{i+1}}du \,\, \text{ for all }  i\geq 1 
	\end{equation}
	where the $\omega_1,\dots,\omega_g$ are a basis of $H^0(C,\omega_C)$ and the $F_i(u)$ are the corresponding polynomials. The tau function of this frame is a polynomial that can be described as follows:
	
	\begin{proposition}\label{lemma:shapetau}
		Let $\lambda_0,\lambda_{\infty}$ be the Weierstrass partitions of the curve $C$ at the smooth point $P_0$ and at the singular point respectively. Then the tau function of $U=\iota(H^0(C,L(\infty P_0)))$ has the form
		\[
		\tau = \xi_{\lambda_0}\cdot \sigma_{\lambda_0}+\sum_{\lambda_0 < \lambda' < \lambda_{\infty}} \xi_{\lambda'}\cdot \sigma_{\lambda'}+\xi_{\lambda_{\infty}}\cdot \sigma_{\lambda_{\infty}}
		\]
		with $\xi_{\lambda_0}\ne 0$ and $\xi_{\lambda_{\infty}} \ne 0$. In particular, it is a polynomial and the terms of lowest and highest weight are $\sigma_{\lambda_0}$ and $\sigma_{\lambda_{\infty}}$ respectively.
	\end{proposition}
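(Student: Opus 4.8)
The plan is to read off the nonzero Pl\"ucker coordinates $\xi_\lambda$, and their weights, directly from the frame \eqref{eq:frameonesingularity}. First I would record that $\iota(\omega_j)=u^{2-g}F_j(u)$ for $j\le g$ (since here $m=g-1$), while $\iota(\omega_{g+i})=u^{1-g-i}$ for $i\ge 1$; thus the columns of the frame matrix $\xi$ with index $>g$ are the single monomials sitting in rows $-g-1,-g-2,\dots$, and the head columns $j\le g$ have entries only in rows $\ge 1-g$. Because each tail column has a unique nonzero entry, expanding the minor \eqref{eq:SGMpara} shows that $\xi_\lambda=0$ unless the Maya diagram of $\lambda$ satisfies $m_i=-i$ for all $i>g$ --- equivalently unless $\lambda$ has at most $g$ parts --- and that in that case $\xi_\lambda$ equals, up to sign, the $g\times g$ coefficient-extraction minor $\det\big([u^{\lambda_i-i+g-1}]F_j\big)_{1\le i,j\le g}$, where $[u^e]F$ denotes the coefficient of $u^e$ in $F$. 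This reduces the whole statement to a determinant attached to the $g$-dimensional space of polynomials $H^0(C,\omega_C)$.

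Next I would translate weights into exponents. Writing $e_i=\lambda_i-i+g-1$, one has $e_1>\dots>e_g$ and $|\lambda|=\sum_i e_i+\tfrac{g(3-g)}{2}$, and the minor vanishes unless every $e_i\ge 0$ (so that only partitions with exactly $g$ parts contribute). Hence controlling $|\lambda|$ is the same as controlling $\sum_i e_i$ over exponent sets $E=\{e_1>\dots>e_g\ge 0\}$ for which the minor is nonzero. For the lower end I would use a basis with $\operatorname{ord}_0 F_j=a_j$, where $0\le a_1<\dots<a_g$ are the vanishing orders at $P_0$, i.e. $a_i=w_i-1$ for the Weierstrass gaps at $P_0$. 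Since $[u^e]F_j=0$ for $e<a_j$, the permutation expansion of the determinant forces $\sum_i e_i\ge\sum_j a_j$, with equality exactly for $E=\{a_1,\dots,a_g\}$, where the minor is triangular and nonzero; a direct check translates this $E$ into $\lambda=\lambda_0$. Running the same argument at the other end with a basis whose top degrees $\deg F_j=D_j$ are distinct (such a basis exists because the pole-order filtration at the singularity is a full flag) bounds $\sum_i e_i\le\sum_j D_j$, with equality only for $E=\{D_1,\dots,D_g\}$; since $[u^e]F_j=0$ for $e>\max_j D_j$ the weights are bounded above, so $\tau$ is a polynomial. Here I use that the set $\{\lambda:\xi_\lambda\ne 0\}$, and hence its extreme-weight members, is unchanged under a change of basis of $H^0(C,\omega_C)$ (which only rescales every $\xi_\lambda$ by a common nonzero factor), so the two extremal computations may use two different adapted bases.

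The heart of the argument --- and the step I expect to be the main obstacle --- is to identify the top partition produced by the degrees $D_j$ with the Weierstrass partition $\lambda_\infty$ of the singularity; concretely I must show $D_j=w_j^{\infty}-1$, where $w_1^{\infty}<\dots<w_g^{\infty}$ are the gaps of the value semigroup $\mathbb{H}$ at the singular point (note $\delta=g$ since the curve is rational with one unibranch singularity). Here the Rosenlicht description \eqref{RosenlichtDiff} of $\omega_C$ enters. In the local coordinate $s=1/u$ at the preimage $\widetilde{Q}$ of the singularity, a residue computation shows that a germ of order $m$ lies in $\omega_{C,\widetilde{Q}}$ precisely when $-1-m\notin\mathbb{H}$, so the admissible orders are $\mathbb{N}\cup\{-1-w:w\in\mathbb{W}^{\infty}\}$, whose negative part has exactly $g$ elements. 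On the other hand every global canonical differential is $F(u)\,du$ with a pole at $\widetilde{Q}$ of order $\deg F+2\ge 2$, so the orders realized by the $g$-dimensional space $H^0(C,\omega_C)$ are $g$ distinct negative integers; being forced into the $g$-element negative admissible set, they must exhaust it. Converting orders back to degrees via $-(D+2)=-1-w$ gives $D_j=w_j^{\infty}-1$, so the maximal exponent set yields exactly $\lambda_\infty$ with $\xi_{\lambda_\infty}\ne 0$. Combining the two ends, $\lambda_0$ and $\lambda_\infty$ are the unique partitions of minimal and maximal weight with nonvanishing coordinate, every remaining term has weight strictly between $|\lambda_0|$ and $|\lambda_\infty|$, and $\xi_{\lambda_0},\xi_{\lambda_\infty}\ne 0$, which is exactly the asserted shape of $\tau$.
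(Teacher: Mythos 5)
Your proposal is correct and follows essentially the same route as the paper's proof: a basis adapted to the vanishing orders at $P_0$ controls the lowest-weight term, a basis adapted to the pole orders at the singularity (identified with the gaps $w_j^{\infty}$ via the Rosenlicht condition \eqref{RosenlichtDiff} and a counting argument) controls the highest-weight term and polynomiality, and the two are reconciled by the fact that changing the frame only rescales all Pl\"ucker coordinates by a common nonzero factor. Your explicit reduction to $g\times g$ coefficient-extraction minors and the permutation-expansion bounds are just a more detailed rendering of the paper's echelon-form argument for the frame matrix \eqref{eq:frameonesingularpoint}.
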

	\begin{proof}
		First, let $\mathbb{W}^{\infty} = ( w^{\infty}_1,w^{\infty}_2,\dots,w^{\infty}_g )$ be the set of gaps at the singular point.  Since the singularity is Gorenstein, we know that $w^{\infty}_1=1,w^{\infty}_g=2g-1$. From the condition \eqref{RosenlichtDiff}, we see that if $\omega \in H^0(C,\omega_C)$ is a canonical differential then $\operatorname{ord}_{\infty}(\omega) \in \{-w^{\infty}_1-1,\dots,-w^{\infty}_g-1\}$. Hence, we can find a basis $\omega_i = F_i(u)du$ of $H^0(C,\omega_C)$ where $\deg F_i = w^{\infty}_i-1$. This yields the frame
		\[ \iota(\omega_i) = u^{2-g}F_i(u) \text{ for } i=1,\dots,g, \quad \iota(\omega_{g+i}) = u^{-g-i+1} \text{ for } i\geq 1 \]
		and if we write it in the form \eqref{eq:xishape} we obtain a matrix of the form
		
		\begin{equation}
		\label{eq:frameonesingularpoint}
		\xi = \begin{pmatrix}
		I & 0 \\
		0 & 0 \\
		0 & B \\
		0 & 0 \\
		\vdots & \vdots 
		\end{pmatrix}
		\end{equation}
		where $I$ is an infinite (in the upper left direction) identity matrix, and $B$ is a $w^{\infty}_g\times g$ matrix, which is between the rows $-(g-1)$ and $w^{\infty}_g-(g-1)$ of $\xi$. In particular, the $-g$ row of $\xi$ is zero. Furthermore, the matrix $B$ is in column echelon form, where the pivot of the column $i$ is at the row $w^{\infty}_i+1-g$. Then, from the shape of this matrix we can see that
		\[ \tau = \sum_{\lambda'<\lambda_{\infty}} \xi_{\lambda'}\cdot \sigma_{\lambda'} + \xi_{\lambda_{\infty}}\sigma_{\lambda_{\infty}}  \] 
		where $\xi_{\lambda_{\infty}}\ne 0$.
		
		In the same way, let $\mathbb{W}^{0} = ( w^{0}_1,w^{0}_2,\dots,w^{0}_g )$ be the gap sequence corresponding to the smooth point $P_0$. Then we can find another basis $\widetilde{\omega}_1,\dots,\widetilde{\omega}_g$ of $H^0(C,\omega_C)$ such that $\widetilde{\omega}_j = \widetilde{F}_j(u)du$, with $\operatorname{ord}_0(\widetilde{F}_j(u)) = w^0_j-1$. Then the tau function coming from the corresponding frame has the form
		\[ \widetilde{\tau} = \widetilde{\xi}_{\lambda_{0}}\sigma_{\lambda_{0}}+ \sum_{\lambda_{0}<\lambda'} \widetilde{\xi}_{\lambda'}\cdot \sigma_{\lambda'}   \] 
		where $\widetilde{\xi}_{\lambda_{0}}\ne 0$. Since two different frames for the same point in the Sato Grassmannian give the same tau function up to scalar multiplication, we can conclude.
	\end{proof}
	
	At this point, it is easy to show that the theta function and the tau function are essentially the same, up to a scalar multiplication. We fix one basis $\omega_1,\dots,\omega_g$ of $H^0(C,\omega_C)$ and we denote by $\tau(\mathbf{x})$ one tau function induced by the frame \eqref{eq:frameonesingularity}. We denote by $\theta(z_1,\dots,z_g)$ one theta function, coming from the Abel map with base point $P_0$ and basis of differentials $\omega_1,\dots,\omega_g$. We also let $A$ be the matrix appearing in \eqref{eq:equationtheta}. Observe that in this case this is actually a finite matrix. 
	
	\begin{theorem}\label{thm:tauthetaonesing}
		Let $C$ be a unibranch, Gorenstein rational curve with a unique singularity. Then with the above notation, we have that
		\[ \tau(\mathbf{x}) = \exp(c_0)\cdot \theta(A\mathbf{x}) \]
		for a certain $c_0\in \mathbb{C}$.
	\end{theorem}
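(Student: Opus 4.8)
The plan is to specialize the general comparison in Theorem \ref{thm:tauthetageneral} to the present situation and then rule out any genuinely transcendental factor by exploiting the fact that here everything in sight is algebraic. First I would observe that the setup of this subsection is exactly the one of Section \ref{sec:thetatau} with the specific choice $D = (g-1)P_0$, so that the translation vector vanishes: $\mathbf{b}$ represents $\alpha(D-(g-1)P_0) = \alpha(0) = 0$. Consequently Theorem \ref{thm:tauthetageneral} yields an identity of formal power series
\[ \tau(\mathbf{x}) = \exp\big(C(\mathbf{x})\big)\cdot \theta(A\mathbf{x}), \]
and, as recorded in the proof of that theorem, $C(\mathbf{x})$ is affine-linear, $C(\mathbf{x}) = c_0 + \sum_{i\ge 1} c_i x_i$. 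It therefore remains only to show that all the linear coefficients $c_i$ vanish, so that $C(\mathbf{x}) = c_0$ is the constant promised in the statement.

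The key finiteness inputs are that both factors other than the exponential are polynomials. Indeed $\tau$ is a polynomial by Proposition \ref{lemma:shapetau}, and $\theta(\mathbf{z})$ is a polynomial because the theta divisor of $C$ is algebraic by Proposition \ref{AlgCharac}. Moreover, in the frame \eqref{eq:frameonesingularity} the canonical differentials are $\omega_j = F_j(u)\,du$ with $F_j$ an honest polynomial, so each coordinate $z_j = \sum_i a_{ij}x_{i+1}$ of the Abel map in \eqref{eq:equationtheta} is a \emph{finite} linear combination of the $x_i$; hence $\theta(A\mathbf{x})$ is a genuine polynomial, and both $\tau$ and $\theta(A\mathbf{x})$ lie in a common finite polynomial ring $\mathbb{C}[x_1,\dots,x_N]$. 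It is nonzero since $\tau \neq 0$. Setting $E := \exp\big(\sum_{i\ge 1}c_i x_i\big)$, the displayed identity reads $E = e^{-c_0}\,\tau/\theta(A\mathbf{x})$, which exhibits $E$ as the power-series expansion of a rational function in $x_1,\dots,x_N$.

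Finally I would derive a contradiction from the coexistence of $E$ being rational and $E$ being the exponential of a linear form. For each index $i$ we have $\partial_{x_i}E = c_i E$, so the logarithmic derivative $\partial_{x_i}E/E$ equals the constant $c_i$. On the other hand, for a nonzero rational function $R$, the partial-fraction expansion of $\partial_{x_i}R/R$ in the single variable $x_i$ (over the field generated by the remaining variables) is a sum of simple-pole terms $m/(x_i-a)$ with no polynomial part; in particular it can never equal a nonzero constant. Applying this to $R = E$ forces $c_i = 0$ for every $i$ (for indices $i > N$ this is immediate, since then $\partial_{x_i}E = 0$ while $E \neq 0$). Hence $C(\mathbf{x}) = c_0$ and $\tau(\mathbf{x}) = \exp(c_0)\,\theta(A\mathbf{x})$, as claimed. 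I expect the last step to be the only real content: verifying $\mathbf{b}=0$ and that both factors are polynomials is bookkeeping, whereas the essential point is the incompatibility of a nonconstant exponential with rationality, which the logarithmic-derivative computation makes precise.
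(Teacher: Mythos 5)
Your proof is correct, but it follows a genuinely different route from the paper's. The paper does not specialize Theorem \ref{thm:tauthetageneral} and then kill the linear part of $C(\mathbf{x})$; instead it reruns the divisor-theoretic argument of Proposition \ref{prop:pullbacktheta} \emph{globally}: since $C_0\cong\mathbb{C}$ when there is a unique unibranch singularity, the coordinate $u$ is global, so $\tau_n(u_1,\dots,u_n)$ and $\theta_n(u_1,\dots,u_n)$ are two global polynomial equations for the same divisor $a_n^*\overline{W}_{g-1}$ on all of $\mathbb{C}^n$; hence they differ by a unit of $\mathbb{C}[u_1,\dots,u_n]$, i.e.\ a nonzero scalar, and the exponential factor never arises. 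Your version takes the local comparison as a black box and then excludes a nontrivial exponential by the logarithmic-derivative computation, which is sound: for nonzero rational $R$, the expansion of $\partial_{x_i}R/R$ over the algebraic closure of the field generated by the other variables is a sum of terms $m_j/(x_i-a_j)$, so it is never a nonzero constant. Two caveats are worth making explicit. First, your argument genuinely needs the affine-linear form $C(\mathbf{x})=c_0+\sum_i c_i x_i$, which is asserted only \emph{inside} the proof of Theorem \ref{thm:tauthetageneral} (its statement says only ``for a certain $C(\mathbf{x})$''); for a general power series $S$ with $S(\mathbf{0})=0$ the exponential $\exp(S)$ can perfectly well be rational, e.g.\ $\exp\bigl(\sum_{k\ge1}x_1^k/k\bigr)=1/(1-x_1)$, so the rationality of $E$ alone proves nothing without linearity. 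The paper's direct argument sidesteps this dependency entirely. Second, the paper's method transfers immediately to curves with several singularities (Theorem \ref{thm:tauthetamoresing}), where the units of the localized coordinate ring account exactly for the surviving exponential factor, whereas your rationality obstruction would have to be relaxed there. What each approach buys: yours is purely formal once polynomiality of $\tau$ and $\theta(A\mathbf{x})$ is known and requires no return to the geometry; the paper's is shorter, self-contained at the level of the divisor argument, and structurally uniform with the more general theorem that follows.
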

	\begin{proof}
		The idea is to replicate the proof of Proposition \ref{prop:pullbacktheta} and Theorem \ref{thm:tauthetageneral}. Indeed, in our case, the local coordinate $u$ extends to a global coordinate on the whole of $C_0\cong \mathbb{C}$. Hence, repeating the proof of Proposition \ref{prop:pullbacktheta} with this global coordinate, we see that the divisor $a_n^*\overline{W}_{g-1}$ on $C_0^n$ has global equation $\tau_n(u_1,\dots,u_n)=\tau(p_1,\frac{1}{2}p_2,\dots,\frac{1}{g}p_g)$, which is a polynomial because of Proposition \ref{lemma:shapetau}. On the other hand, following the proof of Theorem \ref{thm:tauthetageneral}, the polynomial $\theta_n(u_1,\dots,u_n)=\theta(A\mathbf{p})$ is also a global equation for the same divisor. Hence it must be that $\tau_n = C_n \cdot \theta_n$ for a certain $C_n\in \mathbb{C}^*$. At this point, reasoning as in the proof of Theorem \ref{thm:tauthetageneral} we see that it must be $C_n=C_g$ for all $n\geq g$. Hence, taking $c_0$ such that $\exp(c_0)=C_g$ and reasoning as in the proof of Theorem \ref{thm:tauthetageneral}, it follows that $\tau(\mathbf{x})=\exp(c_0)\cdot \theta(A\mathbf{x})$.	
	\end{proof}
	
	Now we want to relate this tau function with the algebraic theta divisors we found before. We first exhibit an example as follows:
	
	\begin{example}
		\label{SatoThetaDivisorMonomial456}
		Consider the curve $C$ from Example~\ref{Monomial456}. The corresponding semigroup is encoded by the partition $\lambda = (4,1,1,1)$. We have seen in Example \ref{ThetaDivisorMonomial456} that the corresponding algebraic theta function, was
		\[ 
		\theta(z_1,z_2,z_3,z_4) = z_1^7+21z_1^4z_3-84z_1^3z_2^2+252z_1z_3^2+252z_2^2z_3-252z_4.
		\]
		The corresponding tau function is given exactly by a single Schur-Weierstrass polynomial
		$\tau = \sigma_{(4,1,1,1)} =\frac{1}{252}\left( x_1^7+21x_1^4x_3-84x_1^3x_2^2+252x_2^2x_3+252x_1x_3^2-252x_7 \right)$.
		Indeed, one can check that
		\[ \theta(x_1,x_2,x_3,x_7) = 252\cdot \tau \]
		which verifies the theorem. In particular, we can recover the tau function from the theta function. But we can also recover the theta function from the tau function: it is enough to set $x_i=0$ in $\tau$ for all $i\notin\{1,2,3,7\}$. In other words, we set to $0$ all the variables indexed by the semigroup of the singularity.
		$\triangle$
	\end{example}
	
	This example generalizes, recovering a result of \cite{BEL} about theta functions of monomial curves.
	
	\begin{corollary}\label{cor:monomial}
		Let $\mathbb{H}\subseteq \mathbb{N}$ be a Gorenstein semigroup and let $\mathbb{W}=(w_1,\dots,w_g)$ be the corresponding Weierstrass gap sequence and $\lambda$ the corresponding partition. Consider the monomial curve $C$ which is given as the image of
		\[ \nu\colon \mathbb{P}^1 \longrightarrow \mathbb{P}^{g-1}, \qquad (u:t) \mapsto (u^{w_1-1}t^{2g-1-w_1}: \ldots : u^{w_g-1}t^{2g-1-w_{g}}). \]
		Then this is a Gorenstein rational curve of genus $g$ with a unique unibranch singularity, with a basis of canonical differentials $(\omega_1,\omega_2,\dots,\omega_g)$ with $\omega_i = u^{w_i-1}du$.  Taking the tau function and the theta function with respect to this basis we obtain
		\[ \tau(\mathbf{x}) =  \sigma_{\lambda}(\mathbf{x}) =
		c\ \theta(x_{w_1},x_{w_2},\dots,x_{w_g}),\]
		for some nonzero constant $c$. 
	\end{corollary}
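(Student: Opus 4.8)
The plan is to combine the explicit geometry of the monomial curve with Proposition~\ref{lemma:shapetau} and Theorem~\ref{thm:tauthetaonesing}, reducing the statement to two bookkeeping computations: that the frame of the monomial curve is a permutation-type $0$--$1$ matrix, so that its tau function is a single Schur--Weierstrass polynomial, and that the Abel map sends each $\omega_i$ to a single power sum.

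First I would establish the geometry. The map $\nu$ is given by monomials of common degree $2g-2$, so its image is a rational curve in $\mathbb{P}^{g-1}$; since $\mathbb{H}$ is a numerical semigroup its generators have gcd $1$, so $\nu$ is birational onto $C$ and is the normalization. Working in the chart $u=1$ with local coordinate $t$ near the point $(1:0)$, the local ring is generated by the monomials $t^{2g-1-w_i}$; by the symmetry~\eqref{symmetry} the exponents $\{2g-1-w_i\}$ are exactly the nongaps of $\mathbb{H}$ not exceeding $2g-1$, and these generate $\mathbb{H}$, so the value semigroup at this unique singular point is $\mathbb{H}$. Thus $\delta = |\mathbb{W}| = g$, whence $C$ has arithmetic genus $g$, and the symmetry makes the singularity Gorenstein by the criterion~\eqref{symmetry}. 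Finally, in the coordinate $s = 1/u$ at the singular point one has $\omega_i = u^{w_i-1}\,du = -s^{-1-w_i}\,ds$, which has no residue; moreover $\operatorname{Res}(\nu^*f\cdot \omega_i)$ vanishes for every $f\in\mathcal{O}_{C,P}$, since $\nu^* f$ has order in $\mathbb{H}$ and a nonzero residue would force that order to equal the gap $w_i$. Hence each $\omega_i$ satisfies~\eqref{RosenlichtDiff}, so the $\omega_i = u^{w_i-1}\,du$ lie in $H^0(C,\omega_C)$; having distinct pole orders they are independent, and being $g$ in number they form a basis.

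Next I would compute the tau function from the frame~\eqref{eq:frameonesingularity}, following the proof of Proposition~\ref{lemma:shapetau} with $m = g-1$. Since each $\iota(\omega_i) = u^{w_i+1-g}$ and each $\iota(\omega_{g+i}) = u^{1-g-i}$ is a single monomial, the frame matrix $\xi$ has exactly one nonzero entry, equal to $1$, in each column, and these lie in the distinct rows $\{w_j - g\}_{j=1}^g \cup \{-g-i\}_{i\ge 1}$. Consequently every maximal minor vanishes except the one whose row set is precisely this set; that is, there is a single nonzero Pl\"ucker coordinate, equal to $\pm 1$. Reading off the associated Maya diagram, its rows in decreasing order are $w_g - g > \cdots > w_1 - g > -g-1 > -g-2 > \cdots$, giving $m_k = w_{g+1-k} - g$ for $k\le g$ and $m_k = -k$ for $k > g$; thus $\lambda_k = m_k + k = w_{g+1-k} - (g-k)$, which is exactly the partition~\eqref{partition} attached to $\mathbb{H}$. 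Therefore $\tau = \pm\sigma_\lambda$, and after absorbing the sign we get $\tau(\mathbf{x}) = \sigma_\lambda(\mathbf{x})$.

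Finally I would run Theorem~\ref{thm:tauthetaonesing}. Integrating $\omega_i = u^{w_i - 1}\,du$ from the basepoint $P_0$ at $u=0$ gives the Abel coordinate $z_i = \sum_h \tfrac{1}{w_i}u_h^{w_i} = \tfrac{1}{w_i}p_{w_i} = x_{w_i}$, so the matrix $A$ of~\eqref{eq:equationtheta} is the inclusion $A\mathbf{x} = (x_{w_1},\dots,x_{w_g})$. Theorem~\ref{thm:tauthetaonesing} then yields $\tau(\mathbf{x}) = \exp(c_0)\,\theta(A\mathbf{x})$, which combined with $\tau = \sigma_\lambda$ gives $\sigma_\lambda(\mathbf{x}) = c\,\theta(x_{w_1},\dots,x_{w_g})$ with $c = \exp(c_0)\ne 0$. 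The main obstacle is the index bookkeeping in the second step: one must check carefully that the row set of the monomial frame is a legitimate Maya diagram whose partition coincides with~\eqref{partition} (in particular that the row $-g$ is skipped, as in the proof of Proposition~\ref{lemma:shapetau}), together with the purely semigroup-theoretic claim that the small nongaps generate the symmetric semigroup $\mathbb{H}$; both are elementary but must be verified so that $\tau$ is genuinely the single Schur--Weierstrass polynomial $\sigma_\lambda$.
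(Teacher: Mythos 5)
Your argument is correct in substance and has the same skeleton as the paper's proof (geometric facts about the monomial curve, identification of the tau function, then Theorem \ref{thm:tauthetaonesing}), but the central step is carried out by a genuinely different route. The paper identifies $\tau$ with $\sigma_\lambda$ indirectly: it observes that for the monomial curve the Weierstrass partitions at the smooth base point $u=0$ and at the singular point $u=\infty$ are both equal to $\lambda$, so the sandwich $\xi_{\lambda_0}\sigma_{\lambda_0}+\sum_{\lambda_0<\lambda'<\lambda_\infty}\xi_{\lambda'}\sigma_{\lambda'}+\xi_{\lambda_\infty}\sigma_{\lambda_\infty}$ of Proposition \ref{lemma:shapetau} collapses to a single term. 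You instead compute the frame \eqref{eq:frameonesingularity} explicitly: each $\iota(\omega_i)$ is a single monomial, so the matrix \eqref{eq:xishape} has exactly one nonzero entry per column, exactly one Pl\"ucker coordinate survives, and its Maya diagram is the one attached to $\lambda$ by \eqref{partition}. Your version is more explicit (it also pins down the coefficient up to sign and makes the final index bookkeeping transparent), at the cost of redoing by hand what Proposition \ref{lemma:shapetau} already packages; the last step, computing $A$ so that $A\mathbf{x}=(x_{w_1},\dots,x_{w_g})$ and invoking Theorem \ref{thm:tauthetaonesing}, is identical in both.

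One point in your geometric preamble needs repair. You justify birationality of $\nu$ by saying that the generators of $\mathbb{H}$ have gcd $1$, but the exponents actually occurring in the parametrization are the small nongaps $2g-1-w_i$, which are not a priori a generating set of $\mathbb{H}$; your later assertion that they do generate $\mathbb{H}$ is precisely what must be proved. Both claims fail for the hyperelliptic semigroup $\mathbb{H}=\langle 2,2g+1\rangle$: there every $2g-1-w_i$ is even, $\nu$ is $2{:}1$ onto a rational normal curve of degree $g-1$, and the image is smooth of genus $0$, so the statement itself degenerates. For every other Gorenstein semigroup one checks, using the symmetry \eqref{symmetry}, that all minimal generators of $\mathbb{H}$ are at most $2g-2$, which makes both of your claims correct. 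The paper's ``one can check explicitly'' suppresses the same case distinction, so this is a gap shared with, rather than introduced relative to, the paper's own proof --- but since you chose to spell the geometry out, you should either exclude $\langle 2,2g+1\rangle$ or prove the generation statement for the remaining semigroups.
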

	\begin{proof}
		One can check explicitly that the curve $C$ is rational, unibranch, with a unique singularity at $\nu(1,0)$, which is Gorenstein with semigroup $\mathbb{W}$. Furthermore, one can check that the differentials $\omega_i = u^{w_i-1}du$ give a basis of the canonical differentials. Then, the argument in the proof of Proposition \ref{lemma:shapetau} shows that the Weierstrass partitions at $u=0$ and $u=\infty$ must both be equal to $\lambda$, and then Proposition \ref{lemma:shapetau} shows that it must be $\tau(\mathbf{x})=\sigma_{\lambda}(\mathbf{x})$ up to a scalar multiple. The other equality follows from Theorem \ref{thm:tauthetaonesing}.
	\end{proof}
	
	\begin{remark}
		In particular, this shows that the Schur-Weierstrass polynomial $\sigma_{\lambda}(\mathbf{x})$ depends only on the variables $x_{w_1},\dots,x_{w_g}$.
	\end{remark}
	
	\begin{example}\label{ex:semigroup<2,9>}
		For a different example, consider the curve $C$ given by the image of the map
		\begin{align*}
		\label{rattriple}
		\phi : \mathbb{P}^1 &\longrightarrow \mathbb{P}^3\\
		(t:u) &\longmapsto (t^6:t^4u^2:t^2u^4 + t^5u/3 :2t^3u^3/3 + u^6).
		\end{align*}
		This is an irreducible rational curve of arithmetic genus $4$ whose point 
		$P = \phi(0:1) = (0:0:0:1)$ is its only singularity.  It can be checked that the singularity is unibranch and Gorenstein with value semigroup equal to the hyperelliptic semigroup $\mathbb{S}_4 = \langle 2, 9 \rangle$. If, indeed, we write the 
		homogeneous coordinates in 
		$\mathbb{P}^3$ as $(x:y:z:w)$ then the coordinate function $z$ restricted to the curve vanishes to order $2$ at $P$, 
		$y$ vanishes to order $4$, 
		$x$ vanishes to order $6$, and 
		$xw - yz$ vanishes to order $9$.
		The corresponding partition is $\lambda_{\infty}=(4,3,2,1)$. Instead, the Weierstrass partition at the point $P_0=\phi(1:0) = (1:0:0:0)$ is $\lambda_0=(1,1,1,1)$. A basis for 
		${\rm H}^0(C,\omega_C)$ is given by 
		$\left( \left(u^6+\frac{2}{3}u^3\right)du,\left(u^4+\frac{1}{3}u\right)du,u^2du,du\right)$.
		Taking $\{u=0\}$ as the base point of the Abel map, the resulting algebraic theta function turns out to be
		\begin{align*}
		\theta(z_1,z_2,z_3,&z_4) = z_4^{10}-45z_3z_4^7-15z_4^7+315z_2z_4^5-4725z_3^3z_4+4725z_2z_3z_4^2-1575z_1z_4^3\\
		&-(175/4)z_4^4-4725z_3^2z_4+1575z_2z_4^2-4725z_2^2+4725z_1z_3-1050z_3z_4+1575z_1\nonumber 
		\end{align*}
		whereas the tau function turns out to be
		\[
		\tau(\mathbf{x})= -\frac{2}{9}\sigma_{(1,1,1,1)}(\mathbf{x})-\frac{1}{3}\sigma_{(4,1,1,1)}(\mathbf{x})-\frac{2}{3}\sigma_{(2,2,2,1)}(\mathbf{x}) +\sigma_{(4,3,2,1)}(\mathbf{x}).
		\]
		We can check explicitly that 
		\[ \tau(\mathbf{x}) = \frac{1}{4725}\, \theta\left( x_7+\frac{2}{3}x_4,x_5+\frac{1}{3}x_2,x_3,x_1\right),\]
		in agreement with Theorem \ref{thm:tauthetaonesing}. In particular, we can recover the tau function from the theta function. But we can also recover the theta function from the tau function: it is enough to set $x_i=0$ for all $i\notin\{1,3,5,7\}$ in $\tau$. In other words, we set to $0$ all the variables indexed by the semigroup of the singularity.  $\triangle$
	\end{example}

    This latest feature extends to all rational curves with a unique unibranch singularity.

\begin{corollary}\label{cor:thetafromtau}
	Let $C$ be a unibranch, Gorenstein rational curve with a unique unibranch singularity $P_{\infty}$ whose semigroup is $\mathbb{H}$. Then there is a basis of canonical differentials such that the corresponding theta function satisfies,
	\[ \exp(c_0) \cdot \tau(\mathbf{x})\big|_{x_h = 0\,\,  \mathrm{for\,\,all}\,\, h\in \mathbb{H}} =  \theta(x_{w_1^{\infty}},\dots,x_{w^{\infty}_g}), \]
	 for some nonzero constant $c_0$
\end{corollary}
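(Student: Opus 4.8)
The plan is to start from the identity $\tau(\mathbf{x}) = \exp(c_0)\,\theta(A\mathbf{x})$ of Theorem \ref{thm:tauthetaonesing} and to choose the basis $\omega_1,\dots,\omega_g$ of $H^0(C,\omega_C)$ so that the linear substitution $A$ becomes triangular with respect to the splitting of the index set into gaps and semigroup elements. Since $P_0$ corresponds to $u=0$ and the singular point $P_\infty$ to $u=\infty$, every canonical differential is $\omega_j = F_j(u)\,du$ with $F_j$ a polynomial; writing $\omega_j = \bigl(\sum_{i\geq 0} a_{ij}u^i\bigr)du$, the matrix of \eqref{eq:equationtheta} has its $j$-th row equal to the coefficient sequence of $F_j$, so that the $j$-th coordinate of $A\mathbf{x}$ is $(A\mathbf{x})_j = \sum_{k\geq 1}(\text{coeff of } u^{k-1} \text{ in } F_j)\,x_k$. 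Thus everything reduces to controlling which powers $u^{k-1}$ appear in the $F_j$.

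First I would record the constraint on leading terms already isolated inside the proof of Proposition \ref{lemma:shapetau}: the Rosenlicht condition \eqref{RosenlichtDiff} forces $\operatorname{ord}_\infty\omega \in \{-w_1^\infty-1,\dots,-w_g^\infty-1\}$ for every canonical differential, equivalently $\deg F_j \in \{w-1 : w \text{ a gap}\}$. Consider the linear ``gap-coefficient'' map $\Phi\colon H^0(C,\omega_C)\to \mathbb{C}^g$ sending $F\,du$ to the tuple of coefficients of $u^{w-1}$ as $w$ runs over the gaps $w_1^\infty<\dots<w_g^\infty$. I claim $\Phi$ is injective: if all gap-coefficients of a nonzero $F$ vanished, then its top degree $d$ would satisfy $d+1\in\mathbb{H}$, contradicting $\deg F \in \{w-1 : w \text{ a gap}\}$. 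Since both spaces have dimension $g$, $\Phi$ is an isomorphism, so I may take $\omega_1,\dots,\omega_g$ to be the preimage of the standard basis, ordered by increasing gap. Then each $F_j = u^{w_j^\infty - 1} + \sum_{h\in\mathbb{H},\,h<w_j^\infty} c_{jh}\,u^{h-1}$: among the gap indices only $w_j^\infty$ survives, with coefficient $1$, while every subleading monomial is indexed by the semigroup.

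With this basis the $j$-th coordinate of $A\mathbf{x}$ reads $z_j = x_{w_j^\infty} + \sum_{h\in\mathbb{H},\,h<w_j^\infty} c_{jh}\,x_h$, so setting $x_h=0$ for all $h\in\mathbb{H}$ annihilates every subleading term and gives $A\mathbf{x}\big|_{x_h=0,\,h\in\mathbb{H}} = (x_{w_1^\infty},\dots,x_{w_g^\infty})$. Restricting the identity of Theorem \ref{thm:tauthetaonesing} to $x_h=0$ for $h\in\mathbb{H}$ then yields $\tau(\mathbf{x})\big|_{x_h=0} = \exp(c_0)\,\theta(x_{w_1^\infty},\dots,x_{w_g^\infty})$, which is the claim after renaming the constant.

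The main obstacle is the middle step, namely producing a basis adapted to the splitting of the indices into gaps and semigroup elements; its entire content is the injectivity of $\Phi$, and this is precisely where the Gorenstein and unibranch hypotheses enter, through the restriction $\operatorname{ord}_\infty\omega\in\{-w-1 : w \text{ a gap}\}$ inherited from Proposition \ref{lemma:shapetau}. Once that constraint is in hand the remainder is a dimension count and a change of basis. The only bookkeeping to watch is the correspondence among the order of vanishing at $\infty$, the degree of $F_j$, and the index $k$ in $x_k$ (an off-by-one shift coming from the factor $du$), together with choosing the ordering of the basis so the output reads $(x_{w_1^\infty},\dots,x_{w_g^\infty})$ rather than its reverse, exactly as in Example \ref{ex:semigroup<2,9>}.
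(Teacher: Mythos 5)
Your proposal is correct and follows essentially the same route as the paper: the paper's proof also chooses a basis with $F_i$ monic of degree $w_i^\infty-1$ and with no other gap-indexed coefficients, so that $A$ is in reduced row echelon form with pivots at the columns $x_{w_i^\infty}$, and then restricts the identity of Theorem \ref{thm:tauthetaonesing}. The only difference is that you explicitly verify the existence of this adapted basis (via the injectivity of your gap-coefficient map $\Phi$), a point the paper asserts without proof.
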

\begin{proof}
With the previous notation, choose a basis $\omega_i = F_i(u)du$ of $H^0(C,\omega_C)$ such that the $F_i$ are monic of degree $\deg F_i = w_i^{\infty}-1$ and moreover the $F_j$ with $j\ne i$ have no terms of degree $w_i^{\infty}-1$.
	With our hypotheses, the matrix $A$ appearing in Theorem \ref{thm:tauthetaonesing} is a $g\times w^{\infty}_g$ matrix in reduced row echelon form, where the pivots correspond exactly to the $x_{w_i^{\infty}}$. Then the result follows from a straightforward computation. 
\end{proof}

	We have seen how the tau and theta functions are related. A related important notion connecting these functions is the so-called \emph{sigma function}. The generalized Klein sigma function has been introduced in \cite{{BEL}}.  This is a generalization of the elliptic Weierstrass sigma function, for the family of $(n,s)$ curves. This was later studied by Nakayashiki with a different approach \cite{Nak}. The sigma function differs from the Riemann theta function by an exponential factor, which becomes modular invariant. As in the case of the elliptic function, the Klein sigma function has a power series expansion in a neighborbood around ${\bf 0} \in \mathbb{C}^g$. It turns out that the constant term of this expansion is nothing but the Schur-Weierstrass polynomial up to a constant factor \cite{{BEL}}. It is worth to note that the Schur-Weierstrass polynomials associated with certain partitions give rise to rational solutions of the KP hierarchy \cite{AdlMos}. Also, \cite[Theorem 8]{Nak2009} asserts that the tau function can be written as the product of an exponential expression and the sigma function for the case of $(n,s)$ curves. Hence it is beneficial to study the sigma function further to understand the relation between the tau and the theta functions.

	\subsection{Rational unibranch curves with more than one singularity}

	In what follows, we will discuss the general case: Let $C$ be an  irreducible, Gorenstein rational curve of arithmetic genus $g$ with only unibranch singularities, and with normalization $\nu : \mathbb{P}^1 \longrightarrow C$. We choose an affine coordinate $t$ on $\mathbb{P}^1$ such that the points corresponding to the singularities are at $t=0$ and also at $t=t_1,\dots,t_s$. In particular, the point at infinity $P_0$ is a regular point, with a local coordinate $u=\frac{1}{t}$.  With this notation, the smooth locus of $C$ can be identified with
	\[ C_0 = \mathbb{C}_u \setminus \{ t_1^{-1},t_2^{-1},\dots,t_{s}^{-1} \} = \operatorname{Spec} \mathbb{C}[u]_{(1-t_1u)(1-t_2u)\dots (1-t_su)} \]
	and in particular, $u$ works as a global coordinate. We fix one basis $\omega_1,\dots,\omega_g$ of $H^0(C,\omega_C)$ and we denote by $\tau(\mathbf{x})$ one tau function induced by the frame \eqref{eq:frameonesingularity} for $H^0(C,L(\infty P_0))$, where $L=\omega_C(-(g-1)P_{0})$. We denote by $\theta(z_1,\dots,z_g)$ one theta function, coming from the Abel map with base point $P_0$ and basis of differentials $\omega_1,\dots,\omega_g$. We also let $A$ be the matrix appearing in \eqref{eq:equationtheta}. With this we can prove:
	
	\begin{theorem}\label{thm:tauthetamoresing}
		There is a constant  $c_0 \in \mathbb{C}$ and integers $c_1,\dots,c_s\in \mathbb{Z}$ such that
		\[ \tau(\mathbf{x}) = \exp\left( c_0 + \sum_{i=1}^{\infty} (c_1t_1^i+\dots+c_st_s^i)x_i \right) \cdot \theta(A\mathbf{x}) \]
		in the infinitely many variables $\mathbf{x} = (x_1,x_2,x_3,\dots)$.
	\end{theorem}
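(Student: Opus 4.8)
The plan is to follow the strategy of Theorem \ref{thm:tauthetaonesing}, exploiting that the normalization is $\mathbb{P}^1$ so that $u$ is a \emph{global} coordinate on the affine curve $C_0 = \operatorname{Spec}\mathbb{C}[u]_{(1-t_1u)\cdots(1-t_su)}$. First I would repeat the computation of Proposition \ref{prop:pullbacktheta} with this global coordinate to conclude that, after the substitution $x_i = \tfrac1i p_i$ with $p_i = u_1^i+\cdots+u_n^i$, the function $\tau_n(u_1,\dots,u_n) := \tau(p_1,\tfrac12 p_2,\dots)$ is a global equation for $a_n^*\overline{W}_{g-1}$ on $C_0^n$. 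Since $D=(g-1)P_0$ forces $\mathbf b = \alpha(D-(g-1)P_0)=0$, the discussion leading to \eqref{eq:equationtheta} shows in the same way that $\theta_n(u_1,\dots,u_n):=\theta(A\mathbf p)$ is a second global equation for the very same divisor. Both are regular on $C_0^n$: the function $\theta_n$ because the canonical differentials have no residues (they are of the second kind), so their integrals and hence the Abel coordinates $z_j$ are rational functions regular away from the removed points $u_h = t_{j'}^{-1}$; and $\tau_n$ by the determinantal formula of the Lemma preceding Proposition \ref{prop:pullbacktheta}.

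The heart of the argument is the structure of units on $C_0^n$. Since $\tau_n$ and $\theta_n$ cut out the same divisor on the affine variety $C_0^n$, their ratio is an invertible regular function there. The coordinate ring $\mathcal{O}(C_0^n)=\mathbb{C}[u_1,\dots,u_n]_{\prod_{h,j}(1-t_ju_h)}$ is a localization of a UFD in which we have inverted the pairwise non-associate linear primes $1-t_ju_h$; hence every unit has the form $c\prod_{h,j}(1-t_ju_h)^{a_{h,j}}$ with $c\in\mathbb{C}^*$ and $a_{h,j}\in\mathbb{Z}$. Because $\tau_n$ and $\theta_n$ are symmetric in the $u_h$, so is the ratio, which forces $a_{h,j}$ to be independent of $h$; thus $\tau_n/\theta_n = c^{(n)}\prod_{h=1}^n\prod_{j=1}^s (1-t_ju_h)^{c_j^{(n)}}$ for integers $c_j^{(n)}$. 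As in the proof of Theorem \ref{thm:tauthetageneral}, $\tau_n$ and $\theta_n$ are compatible with setting a variable to $0$, and since each factor $1-t_ju_{n+1}$ becomes $1$ at $u_{n+1}=0$, matching exponents shows $c^{(n)}$ and the $c_j^{(n)}$ stabilize for $n\ge g$; write $c=e^{c_0}$ and $c_j$ for the stable values. The integrality of the $c_j$ demanded by the theorem is precisely the integrality of exponents in the unit group, so this explicit description of units is the step I expect to carry the real content: it is what upgrades the undetermined symmetric factor $\exp(C(\mathbf x))$ of Theorem \ref{thm:tauthetageneral} into the sharp shape asserted here.

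It remains to translate the product into exponential form and to lift from finitely many variables to the power-series identity. Expanding $\log(1-t_ju_h)=-\sum_{i\ge1}\tfrac{(t_ju_h)^i}{i}$ and summing over $h$ gives, under $x_i=\tfrac1i p_i$, the identity $\prod_{h=1}^n(1-t_ju_h)^{c_j}=\exp\!\bigl(-c_j\sum_{i\ge1}t_j^i x_i\bigr)$; multiplying over $j$ and reabsorbing the sign into the integers $c_j$ yields
\[ \tau_n = \exp\!\Bigl(c_0 + \sum_{i=1}^\infty (c_1t_1^i+\cdots+c_st_s^i)x_i\Bigr)\Big|_{x_i=\frac1i p_i}\cdot\,\theta_n \]
for all $n\ge g$. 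Finally I would apply Lemma \ref{lemma:equality} to the formal power series $F(\mathbf x)=\tau(\mathbf x)-\exp\!\bigl(c_0+\sum_i(c_1t_1^i+\cdots+c_st_s^i)x_i\bigr)\,\theta(A\mathbf x)$: the previous display says $F(p_1,\tfrac12 p_2,\dots)=0$ for all large $n$, whence $F\equiv 0$, which is exactly the asserted identity of formal power series.
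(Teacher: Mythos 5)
Your proposal is correct and follows essentially the same route as the paper: a global coordinate on $C_0$, the identification of $\tau_n$ and $\theta_n$ as two global equations of the pulled-back theta divisor, the unit-group description in the localized polynomial ring (a UFD) to pin down the ratio as $c\prod_{h,k}(1-t_hu_k)^{c_h}$ with symmetry forcing independence of $k$, stabilization in $n$ via setting a variable to zero, the logarithmic expansion of the linear factors, and finally Lemma \ref{lemma:equality}. No gaps to report.
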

	\begin{proof}
		We follow the proof of Theorem \ref{thm:tauthetaonesing}, again exploiting the fact that the local coordinate $u$ extends to a global coordinate on the whole of $C_0$. Repeating the proof of Proposition \ref{prop:pullbacktheta} with this global coordinate, we see that the divisor $a_n^*\overline{W}_{g-1}$ on $C_0^n$ has global equation $\tau_n(u_1,\dots,u_n)=\tau(p_1,\frac{1}{2}p_2,\dots,\frac{1}{g}p_g)$, which is an element of $\mathbb{C}[u]_{(1-t_1u)\dots(1-t_su)}$. On the other hand, following the proof of Theorem \ref{thm:tauthetageneral}, the element $\theta_n(u_1,\dots,u_n)=\theta(A\mathbf{p})$ in $\mathbb{C}[u]_{(1-t_1u)\dots(1-t_su)}$ is also a global equation for the same divisor. Since $\mathbb{C}[u]_{(1-t_1u)\dots(1-t_su)}$ is a UFD, this means that $\tau_n$ and $\theta_n$ differ by an invertible element, meaning a nonzero scalar or a multiple of $(1-t_iu_k)$. So we can write
		\[ \tau_n(u_1,\dots,u_n) = c^{(n)}\cdot \prod_{h=1}^s \prod_{k=1}^n \left(\frac{1}{1-t_hu_k}\right)^{c^{(n)}_{h}} \theta_n(u_1,\dots,u_n) \]
		for a certain $c^{(n)}\in \mathbb{C}^*$ and $c^{(n)}_{h} \in \mathbb{Z}$. Observe that the $c^{(n)}_h$ do not depend on $k$ because both $\tau_n$ and $\theta_n$ are symmetric in $u_1,\dots,u_n$.
		At this point, reasoning as in the proof of Theorem \ref{thm:tauthetageneral} we see that it must be  $c^{(n)}=c^{(g)} = c$ and $c_h^{(n)}=c_h^{(g)} = c_h$ for all $n\geq g$. So we write
		\[ \tau_n(u_1,\dots,u_n) = c\cdot \prod_{h=1}^s \prod_{k=1}^n \left(\frac{1}{1-t_hu_k}\right)^{c_{h}} \theta_n(u_1,\dots,u_n).\]
		To conclude, we can use the expansion $\log\left(\frac{1}{1-tu}\right) = \sum_{i=1}^{\infty} t^i\frac{u^i}{i}$ to rewrite the above as
		\begin{equation}\label{tauTheta} \tau_n(u_1,\dots,u_n) = \exp\left( \log c + \sum_{i=1}^{\infty} (c_1t_1^i+\dots+c_st_s^i)\frac{p_i}{i} \right) \theta_n(u_1,\dots,u_n)
		\end{equation}
		which implies the final result, thanks to Lemma \ref{lemma:equality}.	
	\end{proof}
	
	In particular, this shows that these algebraic theta functions give rational solutions to the KP equation.
	
	\begin{corollary}\label{cor:rationalsolutionkp}
		With the notations of Theorem \ref{thm:tauthetamoresing}, let $\mathbf{U},\mathbf{V},\mathbf{W}$ be the first three columns of the matrix $A$. Then the function
		\[ f(x,y,t) := 2\frac{\partial^2}{\partial x^2} \log \theta(\mathbf{U}x+\mathbf{V}y+\mathbf{W}t) \]
		is a rational solution to the KP equation.
	\end{corollary}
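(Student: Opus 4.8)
The plan is to deduce this directly from Theorem \ref{thm:tauthetamoresing} together with the fundamental result of Sato and Segal--Wilson \cite{Sato,SegWil} recalled in Section \ref{sec:NotPre}. Since $\tau(\mathbf{x})$ is the tau function of a point of the Sato Grassmannian, the function obtained by setting $x_1 = x$, $x_2 = y$, $x_3 = t$ and $x_i = 0$ for $i \ge 4$, namely $2\,\partial_x^2 \log \tau(x,y,t,0,0,\dots)$, is already a solution of the KP equation \eqref{eq:kp}. It therefore suffices to show that this expression agrees with $f(x,y,t) = 2\,\partial_x^2 \log\theta(\mathbf{U}x + \mathbf{V}y + \mathbf{W}t)$ and that the latter is rational.

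First I would take the logarithm of the identity in Theorem \ref{thm:tauthetamoresing}, obtaining
\[ \log\tau(\mathbf{x}) = c_0 + \sum_{i=1}^{\infty} (c_1 t_1^i + \dots + c_s t_s^i)\, x_i + \log\theta(A\mathbf{x}). \]
The crucial observation is that the exponential prefactor contributes only a term that is \emph{affine} in the variables $\mathbf{x}$; in particular it has degree at most one in $x_1$, so it is annihilated by $\partial_{x_1}^2$. As an identity of formal power series we thus get
\[ 2\,\partial_{x_1}^2 \log\tau(\mathbf{x}) = 2\,\partial_{x_1}^2 \log\theta(A\mathbf{x}). \]
Next I would specialize by setting $x_1 = x$, $x_2 = y$, $x_3 = t$ and $x_i = 0$ for $i \ge 4$; writing $A = (a_{ji})$, the vector $A\mathbf{x}$ then becomes $x\,\mathbf{U} + y\,\mathbf{V} + t\,\mathbf{W}$, where $\mathbf{U},\mathbf{V},\mathbf{W}$ are exactly the first three columns of $A$. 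Since $x,y,t$ are independent, $\partial_{x_1}$ turns into $\partial_x$ after substitution, and combining with the previous display yields
\[ 2\,\partial_x^2 \log\tau(x,y,t,0,0,\dots) = 2\,\partial_x^2 \log\theta(\mathbf{U}x + \mathbf{V}y + \mathbf{W}t) = f(x,y,t), \]
so $f$ solves the KP equation. For rationality I would invoke Proposition \ref{AlgCharac}: the curve $C$ is rational with only unibranch singularities, hence its theta function $\theta$ is algebraic, i.e.\ a polynomial. Therefore $\partial_x^2 \log\theta = (\theta_{xx}\theta - \theta_x^2)/\theta^2$ is a rational function of $x,y,t$, and so is $f$.

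The computation is short, and the only genuine point to verify carefully is that the exponential prefactor in Theorem \ref{thm:tauthetamoresing} really is affine in $\mathbf{x}$, so that it is killed by the second logarithmic $x$-derivative. This is precisely what makes the passage from the a priori transcendental tau function to the polynomial theta function legitimate, and it is where the hypothesis that the singularities are unibranch (so that $\Lambda_C = 0$ and $\theta$ is polynomial) enters decisively.
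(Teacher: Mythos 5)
Your proposal is correct and follows essentially the same route as the paper's own proof: invoke the Sato--Segal--Wilson result that $2\,\partial_x^2\log\tau(x,y,t,0,\dots)$ solves KP, use Theorem \ref{thm:tauthetamoresing} to identify this with $f(x,y,t)$, and conclude rationality from $\theta$ being a polynomial. The paper states the identification more tersely ("exactly equal"), while you make explicit the key point that the exponential prefactor is affine in $\mathbf{x}$ and hence annihilated by $\partial_{x_1}^2$ --- a worthwhile detail, but the same argument.
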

	\begin{proof}
		Since $\tau(\mathbf{x})$ is a solution to the KP hierarchy, the function $2\frac{\partial^2}{\partial x^2} \log \tau(x,y,t,0,0,0,\dots)$ is a solution to the KP equation. However,  Theorem~\ref{thm:tauthetamoresing} shows
		this is exactly equal to the function $u(x,y,t)$. Moreover, since $\theta$ is a polynomial, it follows that $u(x,y,t)$ is a rational function. 
	\end{proof}
	
	\begin{example}
		\label{twosings}
		Consider the curve $C$ which is the image of the map
		\[ \nu\colon \mathbb{P}^1 \longrightarrow \mathbb{P}^3, \qquad (u:t) \mapsto (t^4(t-u)^2:ut^3(t-u)^2:u^4(t-u)^2:t^6). \]
		This is an integral, Gorenstein, unibranch curve of arithmetic genus $4$. It has two singular points: at $u=\infty$ with semigroup $\langle 3,4\rangle$ and at $u=1$ with semigroup $\langle 2,3\rangle$. A basis of the canonical  differentials is given by
		$\left( du, u\,du, u^4du,\frac{1}{(1-u)^2}du\right)$, and if we take the smooth point at $u=0$ as the base point for the Abel map, a corresponding theta function is
		\begin{align}
		\label{alttheta}
		\theta(z_1,z_2,z_3,z_4) = &(z_1^5-5z_1^4+10z_1^3-20z_1z_2^2-10z_1^2+20z_1z_2+20z_2^2-20z_2+20z_3)z_4\\
		&+3z_1^5-10z_1^4+10z_1^3-60z_1z_2^2+20z_1z_2+40z_2^2+60z_3.
		\nonumber
		\end{align}
		We can also compute the corresponding tau function as:
		\begin{equation} 
		\tau(\mathbf{x}) = \sum_{\lambda_1\geq 3} (\lambda_1+3)\sigma_{(\lambda_1,3,1,1)} - 4\sigma_{(2,2,1,1)}-3\sigma_{(2,1,1,1)}
		\end{equation}
		and with some manipulations one can arrive at
		\[\tau(\mathbf{x}) = \exp\left( -\log 20+\sum_{i=1}^{\infty} x_i \right) \cdot \theta\left(x_1,x_2,x_5,\sum_{i=1}^{\infty}ix_i\right), \]
		which agrees with Theorem \ref{thm:tauthetamoresing}. $\triangle$
	\end{example}

	We close this section by adding a few words on how to compute the integers $c_1,\dots,c_s$ appearing in~\eqref{tauTheta}. It is enough to consider the case $n=g$. Then the computation in the proof of Proposition \ref{prop:pullbacktheta} should show that
	\begin{align*} 
	\tau_g(u_1,\dots,u_g) =  \prod_{h=1}^s \prod_{k=1}^n \left(\frac{1}{1-t_hu_k}\right)^{2\delta_{h}} \widetilde{\tau}_g(u_1,\dots,u_g), 
	\end{align*}
	where $\delta_h$ is the delta-invariant of the singularity at $t_h$, and $\widetilde{\tau}_g$ is a polynomial that is not divisible by any of the $(1-\tau_hu_k)$. Then we can also write
	\[
	\theta_g(u_1,\dots,u_g) = \prod_{h=1}^s \prod_{k=1}^n \left(\frac{1}{1-t_hu_k}\right)^{c'_{h}} \widetilde{\theta}_g(u_1,\dots,u_g)
	\]
	for some nonnegative integers $c_h'$ and a polynomial $\widetilde{\theta}_g$ that is not divisible by any of the $(1-t_hu_k)$. Then we see that we can compute the $c_h$ as $c_h = 2\delta_h - c_h'$.

	\section{Degrees of the algebraic theta divisors}\label{sec:algSurf}
	
	In this section, we prove Theorem \ref{theoremDeg} from the Introduction.
	Before giving the proof, we will consider two examples.
	There are (non-hyperelliptic) curves with singular points having the value semigroups
	$\mathbb{S}_g=\langle 2,2g+1 \rangle$ for all arithmetic genera $g \ge 3$.  We start with an example of an
	algebraic theta function coming from such a curve of arithmetic genus 4, one larger than in Eiesland's tacnode-cusp example.
	
	\begin{example}
		\label{example:hyperellipticnonmonomial}
		
		The curve in Example~\ref{ex:semigroup<2,9>} satisfies the hypotheses of Theorem~\ref{AlgCharac} and its algebraic theta divisor attains the degree bound for $g = 4$.  $\triangle$ 

	\end{example}
	
	Our second example will illustrate what happens when there is more than one singular point. The key idea is that there are local contributions from each singular point 
	that determine the total degree of the algebraic theta function.  
	
	\begin{example}
		\label{Genus4TwoSingularPoints}
		Consider the curve $C$ from Example~\ref{twosings}. 
		We have seen that the theta divisor has equation
		\begin{align*}
		\theta(z_1,z_2,z_3,z_4) = &(z_1^5-5z_1^4+10z_1^3-20z_1z_2^2-10z_1^2+20z_1z_2+20z_2^2-20z_2+20z_3)z_4\\
		&+3z_1^5-10z_1^4+10z_1^3-60z_1z_2^2+20z_1z_2+40z_2^2+60z_3.
		\end{align*}
		The partitions corresponding to the two singular points are $(3,1,1)$ and $(1)$.
		We will see that this explains both why the total degree of the 
		implicit equation is 6 and why there is a unique monomial of highest degree, which factorizes as $z_1^5 z_4$.  $\triangle$
		
		\vskip 10pt
		
		We will actually prove a more precise version of Theorem \ref{theoremDeg} as follows: let $C$ be a rational and unibranch Gorenstein curve and suppose that it has singularities at $P_1,\dots,P_h$. Let $\delta_j = \delta(P_j)$ be the corresponding delta invariants and let $\mathbb{W}^j = (w^j_{\delta_j},\dots,w^j_{1})$ be the Weierstrass sequences of $P_j$. Then we can take a basis $\omega^j_i$ for $j=1,\dots,h$ and $i=1,\dots,\delta(P_j)$ such that on the normalization $\mathbb{P}^1$ the differential $\omega^j_i$ has a pole of order $w^j_i+1$ at the point corresponding to $P_j$, and has no other poles. We denote the corresponding coordinates on the Jacobian $\operatorname{Jac}(C)$ by $z^j_i$ and we group them together as $\mathbf{z} = (\mathbf{z}^1,\mathbf{z}^2,\dots,\mathbf{z}^h)$. Recall that $\lambda_{P_j}$ denotes the associated partition to the gaps in $\mathbb{W}^j$. 
		
		\begin{theorem}\label{thm:degreeprecise}
			Let $\theta(\mathbf{z})$ be the equation, up to translation, of a theta divisor obtained from the above basis and a smooth base point $P_0 \in C$. Then $\deg_{\mathbf{z}^j} \theta(\mathbf{z}) = |\lambda_{P_j}|$ and $\deg_{\mathbf{z}}\theta(\mathbf{z}) = \sum_{j=1}^h |\lambda_{P_j}|$. Even more precisely, we can write
			\[ \theta(\mathbf{z}) = (z^1_1)^{|\lambda_{P_1}|}\cdot (z^2_1)^{|\lambda_{P_2}|} \dotsb (z^h_1)^{|\lambda_{P_h}|} + (\text{terms of lower degree in each set of variables } \mathbf{z}^j). \]  
		\end{theorem}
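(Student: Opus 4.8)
The plan is to dispose of the case $h=1$ completely with the tau function and then to reduce the general statement to it by a weight argument. Suppose $C$ has a single singularity $P$ with partition $\lambda_P=\lambda_\infty$, and use the basis of Corollary \ref{cor:thetafromtau}. That corollary identifies $\theta$, up to a nonzero constant, with $\tau$ after setting $x_h=0$ for all $h\in\mathbb{H}$ and relabelling $z_i=x_{w^\infty_i}$. By Proposition \ref{lemma:shapetau} we have $\tau=\sum_{\lambda_0\le\lambda'\le\lambda_\infty}\xi_{\lambda'}\sigma_{\lambda'}$ with $\xi_{\lambda_\infty}\neq0$, and each $\sigma_{\lambda'}$ is weight-homogeneous of weight $|\lambda'|\le|\lambda_\infty|$. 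The key is to compare weight with total degree: every monomial $x_{w^\infty_{i_1}}\cdots x_{w^\infty_{i_r}}$ that survives the restriction has weight $w^\infty_{i_1}+\cdots+w^\infty_{i_r}=|\lambda'|$, so its total degree $r$ satisfies $r\le|\lambda'|\le|\lambda_\infty|$ because every $w^\infty_i\ge1$, with equality only when every factor equals the weight-$1$ variable $x_{w^\infty_1}=x_1=z_1$ and $\lambda'=\lambda_\infty$. Hence $\deg\theta=|\lambda_P|$, the unique top monomial is $z_1^{|\lambda_P|}$, and its coefficient is (up to the overall constant) $\xi_{\lambda_\infty}\cdot\det\bigl(1/(\lambda_{\infty,i}+j-i)!\bigr)\neq0$, this determinant being the coefficient of $x_1^{|\lambda_\infty|}$ in $\sigma_{\lambda_\infty}$ obtained by setting $x_2=x_3=\cdots=0$ in \eqref{eq:SW}.

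\textbf{Multiple singularities: the upper bound.} For each $j$ I assign the weight $w^j_i$ to $z^j_i$ and weight $0$ to all other variables, writing $\deg^{(j)}$ for the resulting weighted degree, and I claim $\deg^{(j)}\theta\le|\lambda_{P_j}|$. Since all weights are $\ge1$, the ordinary degree is bounded by the weighted one, so this yields $\deg_{\mathbf z^j}\theta\le|\lambda_{P_j}|$, and then $\deg_{\mathbf z}\theta\le\sum_j\deg_{\mathbf z^j}\theta\le\sum_j|\lambda_{P_j}|$ since the total degree of any monomial is the sum of its group degrees. To prove the weighted bound I use the one-parameter subgroup $\rho_j(s)\colon z^j_i\mapsto s^{w^j_i}z^j_i$ on $\operatorname{Jac}(C)=\mathbb{C}^g$, whose initial (weighted-leading) form cuts out the flat limit of $\rho_j(s)\cdot\overline{W}_{g-1}$. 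The grading by $\rho_j$ is exactly the filtration by pole order at $P_j$, and since every $\omega^{j'}_i$ with $j'\neq j$ is holomorphic at $P_j$, only the coordinates $\mathbf z^j$ can diverge under this limit; thus the limit is controlled solely by the singularity at $P_j$, and comparison with the single-singularity computation of the first paragraph bounds $\deg^{(j)}\theta$ by $|\lambda_{P_j}|$.

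\textbf{Multiple singularities: the leading term.} The monomial $M=\prod_j(z^j_1)^{|\lambda_{P_j}|}$ has total degree $\sum_j|\lambda_{P_j}|$ and is the only monomial simultaneously attaining $\deg^{(j)}=|\lambda_{P_j}|$ in each group while using only the weight-$1$ variables $z^j_1$. To show its coefficient is nonzero I degenerate $C$ through an equisingular (constant-semigroup, hence genus-preserving) family to the curve with a monomial singularity at every $P_j$, so that $\theta$ varies in a family $\theta_s$ whose special member $\theta_0$ is the monomial theta function. Iterating the single-singularity leading-coefficient computation at each $P_j$ shows the coefficient of $M$ in $\theta_0$ is the product of the nonzero factors $\xi_{\lambda_{P_j}}\det\bigl(1/(\lambda_{P_j,i}+j-i)!\bigr)$, hence nonzero; as nonvanishing of a coefficient is an open condition, it persists for the general member, giving $\deg\theta\ge\sum_j|\lambda_{P_j}|$ and fixing the leading term as stated.

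\textbf{The main obstacle.} The delicate point is the \emph{decoupling} of the singularities underlying the last two paragraphs: that the $\mathbf z^j$-weighted-leading part of $\theta$ depends only on the local singularity at $P_j$, and that the separate leading coefficients multiply without cancellation across different $j$. Equivalently, this is the problem of taming the exponential prefactor $\exp\bigl(c_0+\sum_i(\sum_h c_h t_h^i)x_i\bigr)$ of Theorem \ref{thm:tauthetamoresing} in the degeneration, ensuring that passing to the monomial limit does not shift the top-degree stratum at infinity between different coordinate groups. Concretely, one must prove that the flat limit of $\overline{W}_{g-1}$ under $\rho_j(s)$ is genuinely the theta-type divisor attached to the partially monomialized curve and not a smaller scheme, so that the weighted degree does not drop; once this geometric identification is in place, everything else is the weight-versus-degree bookkeeping already carried out in the one-point case.
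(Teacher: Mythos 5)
Your base case ($h=1$) is sound and essentially the paper's: the weight-versus-degree comparison for the restricted Schur--Weierstrass polynomials, plus the nonvanishing of the coefficient of $x_1^{|\lambda|}$ in $\sigma_\lambda$, is exactly what is needed, and you are even more explicit than the paper about that coefficient. The gap is in the multi-singularity step. You correctly isolate the decoupling of the local contributions as ``the main obstacle,'' but you never close it, and the two devices you propose do not close it. The one-parameter-subgroup argument for the upper bound needs to know that the flat limit of $\overline{W}_{g-1}$ under $\rho_j$ is a theta-type divisor whose $\mathbf z^j$-degree is exactly $|\lambda_{P_j}|$ --- but that identification is the unproven point, and even the isobaricity bound is useless until you know the weight of the limiting term. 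More seriously, your equisingular degeneration to a curve with a monomial singularity at every $P_j$ does not produce a product structure: an \emph{irreducible} rational curve with several monomial singularities still has a theta function that is not a product of Schur--Weierstrass polynomials in separate variable groups (Example~\ref{twosings}/\ref{Genus4TwoSingularPoints} illustrates this: the leading term $z_1^5z_4$ factors, but the polynomial does not), so ``iterating the single-singularity leading-coefficient computation at each $P_j$'' is not available, and the nonvanishing of the coefficient of $M$ in your $\theta_0$ is not established.

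The paper supplies the missing product structure by a different degeneration: it degenerates the irreducible curve $C$ to a \emph{reducible nodal} curve $C_1\cup C_2$, where $C_1$ carries a monomial singularity with the semigroup of $P_1$ and $C_2$ carries the remaining singularities, by gluing two rational curves along $u_1=s/u_2$ and letting $s\to0$. Computing the limits of the abelian integrals shows the theta divisors degenerate to $(\Theta_1\times\mathbb{C}^{g(C_2)})\cup(\mathbb{C}^{g(C_1)}\times\Theta_2)$, so the constant term $\theta'_0(\mathbf z)$ of the family $\theta'(\mathbf z,s)=\sum_\ell\theta'_\ell(\mathbf z)s^\ell$ genuinely factors as $\theta_1(\mathbf z^1)\cdot\theta_2(\mathbf z^2,\dots,\mathbf z^h)$ (Lemma~\ref{lemma:degeneration}); the isobaric weight count you have in mind then controls the higher-order terms $\theta'_\ell$ (Lemma~\ref{lemma:bounddegree}), and induction on the number of singularities, repeated with each $P_j$ placed at infinity, finishes the proof. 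Without some such geometric source of the factorization, your argument establishes the base case and the formal bookkeeping but not the theorem.
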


		
		
		\begin{proof} 
		Our proof of Theorem~\ref{thm:degreeprecise} will be by induction on the number of singular points.
			Suppose first that $C$ is monomial with a unique unibranch singularity $P_1$, and let $\lambda_{P_1}$ be the corresponding partition. Then Corollary \ref{cor:monomial} shows that, after the substitution $z^1_i = x_{w_i^{\infty}}$ the theta function coincides, up to a nonzero scalar, with the Schur-Weierstrass polynomial $\sigma_{\lambda}(\mathbf{x})$, which is a polynomial of degree $|\lambda_{P_1}|$, whose unique term of highest degree is $x_1^{|\lambda_{P_1}|}$. Hence, we can write the theta function as
			\[ \theta(z^1_1,\dots,z^1_g) = (z^1_1)^{|\lambda|} + (\text{ terms of lower degree }). \] 
			The same holds for general curves with a unique unibranch singularity, thanks to Corollary \ref{cor:thetafromtau} and Proposition \ref{lemma:shapetau}. However, this case follows also from the general strategy that we are going to present now.

				For the case that the curve $C$ has more than one singular point, or has one singularity that is non-monomial we will proceed by induction on the number of singularities. Thus, let $P_1,\dots,P_h$ be the singular points of $C$. On the normalization $\mathbb{P}^1$ we choose an coordinate $u$ such that $P_1$ is at $u=\infty$ and the $P_i$ are at $u=\frac{1}{t_i}$ for $i=2,\dots,h$.  Then we can find a basis of differentials of the form
				\begin{align}\label{eq:basisdiffC}
				\omega^{1}_i &= \left( u^{w^{1}_i-1} + \sum_{k=0}^{w^1_i-2}a^1_{i,k} u^k \right)du &  \text{ for } i=1,\dots,\delta_1,  \\ 
				\omega^{j}_i &=  \left( \frac{1}{(u-t_j)^{w^j_i+1}} + \sum_{k=2}^{w^j_i} \frac{a^j_{i,k}}{(u-t_j)^k} \right) du& \text{ for } j=2,\dots,h \text{ and } i=1,\dots,\delta_j,   
				\end{align}
				with some complex numbers $a_{i,k}^j$. Now the idea is to degenerate this curve to the union of one curve $C_1$ with one singularity corresponding to $P_1$ and another curve $C_2$ that has singularities corresponding to $P_2,\dots,P_h$. Furthermore, the singularity on $C_1$ will be monomial, with the same semigroup as the one of $P_1$. We do it taking inspiration from \cite[Section III]{Fay} and \cite{Teissier}: take two smooth rational curves $\widetilde{C}_1$ and $\widetilde{C}_2$ with affine coordinates $u_1,u_2$ respectively.
				Now choose $c\in \mathbb{C}$ such that $|t_i|\leq |c|$ for each $i=2,\dots,h$ and consider for every $s\in \mathbb{C}^*$ the two open subsets $U_1 =\{ |u_1| < |\frac{s}{c}| \} \subseteq C_1$ and $U_2 = \{ |u_2| > |c| \} \subseteq C_2$. We can glue them together via the identification $u_1=\frac{s}{u_2}$, and this gives a family $\widetilde{\mathcal{C}}$ of smooth rational curves $\widetilde{{C}}_s$ over $\mathbb{C}^*$ that degenerate to the union of two smooth rational curves meeting at a node. Following \eqref{eq:basisdiffC}, consider the meromorphic differentials on $\widetilde{{C}}_s$ given by
				\begin{align*}\label{eq:basisdiffCs}
				\omega^{1}_i(s) &= \left( u_1^{w^{1}_i-1} + \sum_{k=0}^{w^1_i-2}a^1_{i,k} s^{w^1_i-1-k} u_1^k \right)du_1 = - s^{w^1_i}\left( \frac{1}{u_2^{w^1_i+1}} + \sum_{k=0}^{w^1_2-2}a^1_{i,k} \frac{1}{u_2^{k+2}} \right) du_2,  \\ 
				\omega^{j}_i(s) &=  \left( \frac{1}{(u_2-t_j)^{w^j_i+1}} + \sum_{k=2}^{w^j_i} \frac{a^j_{i,k}}{(u_2-t_j)^k} \right) du_2 = -s\left( \frac{u_1^{w^j_i-1}}{(s-t_ju_1)^{w^j_i+1}} + \sum_{k=2}^{w^j_i} \frac{a^j_{i,k}u_1^{k-2}}{(s-t_ju_1)^k} \right) du_1  .   
				\end{align*}
				We see that for $s=1$ we recover exactly the canonical differentials of our original curve $C$, and moreover under the action of $\mathbb{C}^*$ on $(u_1,u_2,s)$ given by $\lambda\cdot (u_1,u_2,s) = (\lambda u_1,u_2,\lambda s)$ the differentials transform as
				\begin{equation}\label{eq:torusaction}
				\lambda \cdot \omega^1_i(s) = \lambda^{w^{1}_i} \omega^1_i, \qquad \lambda \cdot \omega^j_i(s) = \omega^j_i(s) \text{ for } j=2,\dots,h.
				\end{equation}
				Hence, the image of the map
				$\widetilde{\mathcal{C}} \to \mathbb{P}^{g-1} \times \mathbb{C}^*$ induced by $\omega^i_j(s)$ is a family $\widetilde{\mathcal{C}}$ of curves ${C}_s$ in which
				${C}_1$ coincides with $C$, all ${C}_s$ for $s \ne 0$
				are isomorphic to $C$,  and ${C}_s$ degenerates to reducible curve $C_0=C_1\cup C_2$ when $s = 0$.
				
				
				Now, consider the base point $P_0 = \{ u_1 =0 \} = \{ u_2= \infty \}$ on each curve $\mathcal{C}_s$ for $s\ne 0$. We obtain a family of theta divisors $\Theta_s \subseteq \mathbb{C}^g \times \mathbb{C}^*_s$ which we can write as
				\begin{equation}\label{eq:thetas}
				\theta'(\mathbf{z},s) = \theta'_n(\mathbf{z})s^n + \theta'_{n-1}(\mathbf{z})s^{n-1} + \dots + \theta'_1(\mathbf{z})s + \theta'_0(\mathbf{z}).
				\end{equation}
				In particular $\theta'(\mathbf{z},1)$ recovers the equation of the theta divisor for our original curve $C$. First we want to study the term $\theta_0'(z)$:
				
				\begin{lemma}\label{lemma:degeneration}
					We have $\theta'_0(\mathbf{z}) = \theta_1(\mathbf{z}^1)\cdot \theta_2(\mathbf{z}^2,\dots,\mathbf{z}^h)$, where $\theta_1(\mathbf{z}^1)$ and $\theta_2(\mathbf{z}^2,\dots,\mathbf{z}^h)$ are equations of theta divisors of the curves $C_1,C_2$, coming from the bases in \eqref{eq:basisdiffC}. 
				\end{lemma}	
			    \begin{proof}
			    	We want to compute the abelian integrals $\int_{P_0}^{Q(s)} \omega^j_i(s)$ for a family of smooth points $Q(s)$ according to whether the limit $Q(0)$ belongs to $C_1$ or $C_2$. In the first case, we can represent $Q(s)$ via the coordinate $u_1(s)$ and we can compute 
			    	\[ \lim_{s\to 0}\int_{P_0}^{Q(s)} \omega^1_i = \frac{1}{w^1_i} u_1(0)^{w^1_i}, \qquad \lim_{s\to 0}\int_{P_0}^{Q(s)} \omega^j_i = \frac{1}{w^j_i} \left( -\frac{1}{t_j} \right)^{w^j_i} + \sum_{k=2}^{w^j_i} \frac{a^1_{i,k}}{k-1}\left( -\frac{1}{t_j} \right)^{k-1}  \]
			    	whereas if $Q(0)$ belongs to $C_2$ we can represent $Q(s)$ via $u_2(s)$ and we get
			    	\[ \lim_{s\to 0}\int_{P_0}^{Q(s)} \omega^1_i = 0, \qquad \lim_{s\to 0}\int_{P_0}^{Q(s)} \omega^j_i = \frac{1}{w^j_i} \frac{1}{(u_2(0)-t_j)^{w^j_i}} + \sum_{k=2}^{w^j_i}\frac{a^j_{i,k}}{(u_2(0)-t_j)^{k-1}}.  \]
			    	Hence, suppose we have families $Q_1(s),\dots,Q_a(s)$ of smooth points with limits in $C_1$ and other families of smooth points $Q_{a+1}(s),\dots,Q_{g-1}(s)$ with limits in $C_2$. Then we can write the limits of the Abel maps with base point $P_0$ as
			    	\begin{equation} 
			    	\lim_{s\to 0} \sum_{i=1}^{g-1} \int_{P_0}^{Q_i(s)} \boldsymbol{\omega}  = \left( \sum_{i=1}^{a} \int_{0}^{Q_i(0)} \boldsymbol{\omega}_1, \sum_{i=a+1}^{g-1} \int_0^{Q_i(0)} \boldsymbol{\omega}_2 + D_a \right)
			    	\end{equation}
			    	where $\boldsymbol{\omega}$ is a basis of $H^0(\mathcal{C}_s,\omega_{\mathcal{C}_s})$, $\boldsymbol{\omega}_i$ is a basis of $H^0(C_i,\omega_{C_i})$, and $D_a$ is a constant depending only on $a$.
			    	In other words, the images $\alpha_{\mathcal{C}_s}(\mathcal{C}_s^{(g-1)}) \subseteq \mathbb{C}^g$ of the Abel maps corresponding to the curve $\mathcal{C}_s$ degenerate to $\bigcup_{a=0}^{g-1} \alpha_{C_1}(C_1^{(a)}) \times (\alpha_{C_2}(C_2^{(g-1-a)})+D_a)$. At this point, we observe that $g=g(C_1)+g(C_2)$ and moreover $\alpha_{C_i}(C_i^{n})$ is dominant onto $\mathbb{C}^{g(C_i)}$ as soon as $n\geq g(C_i)$. Hence, taking closures, it follows that the theta divisors $\Theta_s \subseteq \mathbb{C}^g$ degenerate to the theta divisor $\Theta_0 = (\Theta_1 \times \mathbb{C}^{g(C_2)}) \cup (\mathbb{C}^{g(C_1)} \times \Theta_2)$, where $\Theta_1$ is the theta divisor of $C_1$ arising from the basis in \eqref{eq:basisdiffC} and $\Theta_2$ is a translate of the theta divisor of $C_2$ arising from the basis in \eqref{eq:basisdiffC}. In other words, if $\theta_s(\mathbf{z})$
			    	is the equation of the family of theta divisors $\Theta_s$, then $\theta_0'(\mathbf{z}) = \theta_1(\mathbf{z}_1) \cdot \theta_2(\mathbf{z}_2)$, where $\theta_i(\mathbf{z}_i)$ is an equation for $\Theta_i \subseteq \mathbb{C}^{g(C_i)}$. 
			    \end{proof}

                Then we show that $\theta'_0(\mathbf{z})$ computes the degree with respect to $\mathbf{z}^1$. 
		        
				\begin{lemma}\label{lemma:bounddegree}
					We have that $\deg_{\mathbf{z}^1} \theta'_{\ell}(\mathbf{z}) \leq \deg_{\mathbf{z}^1}\theta'_0(\mathbf{z})-\ell$ for all $\ell=0,\dots,n$.
				\end{lemma}
			    \begin{proof}
			    The torus action \eqref{eq:torusaction} shows that if we give the weights $\operatorname{wt}(s)=1$ to $s$, $\operatorname{wt}(z_{w^1_i}) = w^1_i$ to the variables in $\mathbf{z}^1$ and finally weight $0$ to all the variables in $\mathbf{z}^2,\dots,\mathbf{z}^h$, the polynomial \eqref{eq:thetas} is isobaric with respect to this weight, meaning that $\operatorname{wt}\theta_i(\mathbf{z}) = \operatorname{\theta}'_0(\mathbf{z})-i$. Finally, we look at the degrees with respect to the variables $\mathbf{z}^1$ of the polynomials $\theta'_i(\mathbf{z})$ appearing in \eqref{eq:thetas}. We first observe that
			    \begin{align*} 
			    \deg_{\mathbf{z}^1} \theta_0'(\mathbf{z}) &=  \deg_{\mathbf{z}^1} \theta_1(\mathbf{z}_1)\theta_2(\mathbf{z}_2) = \deg_{\mathbf{z}^1} \theta_1(\mathbf{z_1}) = \operatorname{wt} \theta_1(\mathbf{z}_1) = \operatorname{wt}\theta'_0(\mathbf{z}) 
			    \end{align*}
			    where the equality $\deg_{\mathbf{z}^1} \theta_1(\mathbf{z}^1) = \operatorname{wt} \theta_1(\mathbf{z}^1)$ comes from the fact that with the substitution $z^1_{w^1_j} = x_{w^1_j}$ the polynomial $\theta_1(\mathbf{x})$ is a Schur-Weierstrass polynomial for the partition $\lambda_{P_0}$ (recall that the singularity on $C_1$ is monomial) hence both degree and weight are equal to $|\lambda_{P_0}|$. Hence, it follows that
			    \[ \deg_{\mathbf{z}^1} \theta'_{i}(\mathbf{z}) \leq \operatorname{wt} \theta'_i(\mathbf{z}) = \operatorname{wt} \theta_0'(\mathbf{z}) - i = \deg_{\mathbf{z^1}} \theta'_0(\mathbf{z})-i.   \]	
			    \end{proof}
                
                At this point, we can conclude the proof of Theorem  \ref{thm:degreeprecise}. Indeed, first we observe that the equation for the theta divisor of our original curve is given by $\theta(\mathbf{z}) =\theta'(\mathbf{z},1)$, hence
                \[ \theta(\mathbf{z}) = \theta_n'(\mathbf{z})+\dots+\theta'_0(\mathbf{z}). \]
                Moreover, Lemma \ref{lemma:degeneration} tells us that $\theta'_0(\mathbf{z})$ is the product of two equations $\theta_1(\mathbf{z}^1)$ and $\theta_2(\mathbf{z}^2,\dots,\mathbf{z}^h)$ corresponding to the theta divisors of two curves with a strictly smaller number of singularities. Hence, applying the induction hypothesis of Theorem \ref{thm:degreeprecise} to these factors, we see that
                \[ \theta'_0(\mathbf{z}) = (z^1_1)^{|\lambda_{P_1}|}\cdot (z^2_1)^{|\lambda_{P_2}|} \dotsb (z^h_1)^{|\lambda_{P_h}|} + (\text{terms of lower degree in each set of variables } \mathbf{z}^j) \]
                and then applying Lemma \ref{lemma:bounddegree} we see that we can write
                \[ \theta(\mathbf{z}) = (z^1_1)^{|\lambda_{P_1}|}\cdot (z^2_1)^{|\lambda_{P_2}|} \dotsb (z^h_1)^{|\lambda_{P_h}|} + (\text{terms of lower degree in the variables } \mathbf{z}^1)  \]
                Now the observation is that we can repeat the whole reasoning with another set of coordinates on $\mathbb{P}^1$ such that the point $P_i$ is at $u=\infty$, and this allows us to write for each $j=1,\dots,h$
                 \[ \theta(\mathbf{z}) = (z^1_1)^{|\lambda_{P_1}|}\cdot (z^2_1)^{|\lambda_{P_2}|} \dotsb (z^h_1)^{|\lambda_{P_h}|} + (\text{terms of lower degree in the variables } \mathbf{z}^j)  \]
                 which concludes the proof.

		\end{proof}
		
		\begin{proof}[Proof of Theorem \ref{theoremDeg}] Theorem \ref{thm:degreeprecise} immediately implies most of Theorem \ref{theoremDeg} and to conclude the proof  we just need to prove the last statement. We know that the degree of the algebraic theta divisor of $C$ is $\sum_{j=1}^h |\lambda_{P_j}|$ and since the singularities are Gorenstein, we know that $\lambda_{P_j}$ is a subpartition of $(\delta_j-1,\delta_j-2,\dots,0)$. Hence we see that
		    \begin{equation}\label{bound}
		  \sum_{j=1}^h |\lambda_{P_j}| \leq \sum_{j=1}^h\frac{\delta_j(\delta_j+1)}{2} \leq \frac{g(g+1)}{2},
		    \end{equation}
		    where the last inequality follows from the fact that $g=\sum_{j=1}^h\delta_j$. In particular, we see that the bounds in \eqref{bound} are equalities if and only if $h=1$ and $\lambda_{P_1}=(g-1,g-2,\dots,1,0)$, meaning exactly that the semigroup is $\langle 2,2g+1\rangle$.     
		\end{proof}

	\end{example}

	\section{Further discussion on reducible curves}
	
	To conclude, we present some results and examples on reducible curves.  Suppose $C=C_1\cup \dots \cup C_m$ is a connected, reducible, Gorenstein curve of arithmetic genus $g$. For any divisor $D$ supported on the smooth part $C_0$,  its multidegree $\mathbf{d}=(d_1,d_2,\dots,d_m)$ is the collection of the degrees of its restriction to the various components. Then the symmetric product splits naturally according to the multidegrees: for any multidegree $\mathbf{d} = (d_1,d_2,\dots,d_m)$ we define $C_0^{(\mathbf{d})}$ as the set of effective divisors of multidegree $\mathbf{d}$. Then it is clear that $C_0^{(g-1)} = \bigcup_{\mathbf{d}} C_0^{(\mathbf{d})}$, where $d_1+\dots+d_m = g-1$. Hence, the theta divisor will have different components, but we can still ask whether it is algebraic or not, meaning that all components are algebraic. This is answered in the following generalization of Theorem~\ref{AlgCharac}.
	
	\begin{proposition} 
		\label{reduciblechar}
		Let $C$ be a possibly reducible, connected and reduced Gorenstein curve of arithmetic genus $g$. Then the theta divisor is algebraic if and only if every irreducible component $C_i$ of $C$ is rational with unibranch singularities, and  moreover the curve is tree-like, meaning that $H_1(C,\mathbb{Z})=0$. 
	\end{proposition}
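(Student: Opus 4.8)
The plan is to reduce the statement, exactly as in the proof of Proposition~\ref{AlgCharac}, to the group-theoretic condition $\Lambda_C=0$. I would first check that the now-reducible theta divisor $\overline{W}_{g-1}=\bigcup_{\mathbf d}\overline{W_{\mathbf d}}$ is algebraic if and only if $\Lambda_C=0$. If $\Lambda_C=0$ then $\operatorname{Jac}(C)=H^0(C,\omega_C)^*$ is a vector group and each component $\overline{W_{\mathbf d}}$ is the Zariski closure of a constructible set, hence an algebraic hypersurface. Conversely, if $\Lambda_C\ne 0$, the preimage of any component $\overline{W_{\mathbf d}}$ in $H^0(C,\omega_C)^*$ is invariant under the rank-$\ge 1$ lattice $\Lambda_C$ and so cannot be the zero set of a polynomial, by the same line-intersection argument as in Proposition~\ref{AlgCharac} (see Remark~\ref{rmk:princpol}).

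Next I would analyze $\Lambda_C=0$ through the filtration of Remark~\ref{rmk:sequencejacobians}, namely the surjections $\Lambda_C\twoheadrightarrow\Lambda_{C'}\twoheadrightarrow\Lambda_{\widetilde C}$ attached to $\operatorname{Jac}(C)\twoheadrightarrow\operatorname{Jac}(C')\twoheadrightarrow\operatorname{Jac}(\widetilde C)$, whose kernels are unipotent and a torus respectively. Since $\Lambda_{\widetilde C}\cong H_1(\widetilde C,\mathbb Z)$ and $\widetilde C=\bigsqcup_i\widetilde{C_i}$, vanishing of $\Lambda_{\widetilde C}$ is equivalent to each $\widetilde{C_i}$ having genus $0$, that is, to each component $C_i$ being rational. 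Assuming this, $\operatorname{Jac}(\widetilde C)$ is trivial and $\operatorname{Jac}(C')$ is a torus whose dimension is the first Betti number of the dual graph of $C'$. The key observation is that the finite birational map $C'\to C$ is a continuous bijection of compact Hausdorff spaces, hence a homeomorphism; therefore $b_1(C')=b_1(C)=\operatorname{rank}H_1(C,\mathbb Z)$, and with the components rational this Betti number is exactly the torus dimension. Consequently $\Lambda_{C'}=0$ (equivalently the torus is trivial) if and only if $H_1(C,\mathbb Z)=0$, which is condition~(b); and when this holds $\operatorname{Jac}(C')$ is trivial, so $\operatorname{Jac}(C)$ is unipotent and $\Lambda_C=0$, giving the converse.

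It then remains to account for the unibranch part of condition~(a). I would observe that a non-unibranch self-singularity of some $C_i$ identifies two distinct points of $\widetilde{C_i}\cong\mathbb P^1$ in $C'$, which creates a loop and hence contributes to $b_1(C')=b_1(C)$; thus $H_1(C,\mathbb Z)=0$ already forces every $C_i$ to have only unibranch singularities. In fact this shows that condition~(b) implies condition~(a), so the two displayed conditions together are equivalent to the single condition $H_1(C,\mathbb Z)=0$, and stating~(a) separately only records its geometric content. Assembling the directions: algebraicity $\Leftrightarrow \Lambda_C=0 \Leftrightarrow$ [all components rational and $H_1(C,\mathbb Z)=0$] $\Leftrightarrow$ (a) and (b). As an alternative to this structural argument, one could run the residue computation of Proposition~\ref{AlgCharac} component by component on the rational curves $\widetilde{C_i}\cong\mathbb P^1$, where vanishing of all periods is governed by the Rosenlicht relations~\eqref{RosenlichtDiff} together with the tree-like hypothesis.

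The step I expect to be the main obstacle is the middle one: identifying the dimension of the torus part of $\operatorname{Jac}(C')$ with $\operatorname{rank}H_1(C,\mathbb Z)$. This requires combining the description of the torus kernel from \cite[Section~9.2]{BLR} with the topological identification $C'\cong C$ and the computation of $b_1$ of a configuration of rational curves in terms of its dual graph. A secondary point needing care is Step~1, where one must check that algebraicity of the reducible union $\overline{W}_{g-1}$ is equivalent to algebraicity of each of its $\Lambda_C$-periodic components.
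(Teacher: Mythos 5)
Your proposal is correct and follows essentially the same route as the paper: both reduce algebraicity to $\Lambda_C=0$, pass through the filtration of Remark~\ref{rmk:sequencejacobians}, and invoke the structure theory of \cite[Section 9.2]{BLR} to force the torus part of $\operatorname{Jac}(C')$ to vanish. The only difference is that you unpack the citation to \cite[Propositions 9 and 10]{BLR} into an explicit dual-graph/Betti-number computation (using that $C'\to C$ is a homeomorphism), and along the way correctly observe that $H_1(C,\mathbb{Z})=0$ already forces the unibranch condition on each component --- a small sharpening the paper does not state.
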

	
	\begin{proof}
		This is as in the proof of Theorem \ref{AlgCharac}. The theta divisor is algebraic if and only if $\Lambda_C=0$, meaning that $\operatorname{Jac}(C) = H^0(C,\omega_C)^*$ is unipotent. Now we refer to the notation of Remark \ref{rmk:sequencejacobians}. If the curve satisfies our conditions, then we first see that $\widetilde{C}$ is a disjoint union of projective lines, hence $\operatorname{Jac}(\widetilde{C})=0$. At this point, our conditions and \cite[Proposition 10]{BLR} imply that $\operatorname{Jac}(C')=0$, where $C'$ is the partial normalization of $C$. 
		Finally, \cite[Proposition 9]{BLR} shows that $\operatorname{Jac}(C)$ is unipotent. 
		
		Conversely, suppose that $\Lambda_C=0$. Then it must be that $\Lambda_{\widetilde{C}}=0$ as well. Since $\widetilde{C}$ is a smooth curve, this implies that it is a union of projective lines, hence every component of $C$ is rational. 
		Hence, $\operatorname{Jac}(\widetilde{C})=0$ and then \cite[Proposition 10]{BLR} shows that $\operatorname{Jac}(C')$ is a torus. But it is also unipotent because $\Lambda_{C'}=0$. Thus, it must be trivial, and \cite[Proposition 10]{BLR} again proves that this is possible only if $C$ is tree-like and every component has all unibranch singularities.
	\end{proof}
	
	Now we give some examples:
	
	\begin{example}
		\label{CuspPlusTangent}
		We consider the reducible plane quartic $C$ given by the affine equation $y(y^2 - x^3) = 0$. This is a cuspidal cubic $C_1$ together with the cuspidal tangent line $C_2$. Eiesland also considers curves of this form in \cite{Eies09}.   The normalization, written component by component, is 
		\begin{align*}
		\nu : \mathbb{P}^1 \sqcup \mathbb{P}^1 &\longrightarrow C = C_1 \cup C_2\\
		(u:t) &\longmapsto (u^3:ut^2:t^3) \in C_1 \\
		(r:s) &\longmapsto (r:s:0) \in C_2 
		\end{align*}
		and then we get the following canonical differentials, which we write as pairs, showing the restrictions to the two components of the normalization:
		\begin{equation*}
		\omega_1  = \left(u du,-dr\right), \quad
		\omega_2 = \left(du,0\right), \quad
		\omega_3 = \left( u^3du,-rdr\right).
		\end{equation*}
		We will use the points at $u=0$ and $r=0$ as the base points for the Abel map. We consider the three possible multidegrees $(2,0),(1,1)$ and $(0,2)$. For the multidegree $(2,0)$ we have the map
		\[
		(u_1,u_2) \mapsto \left( \int_0^{u_1} udu + \int_0^{u_2} udu , \int_0^{u_1} du + \int_0^{u_2} du ,\int_0^{u_1} u^3du + \int_0^{u_2} u^3du  \right)
		\]
		and the corresponding algebraic theta divisor is given by $\theta(z_1,z_2,z_3) = z_2^4-4z_1z_2^2-4z_1^2+8z_3$. For the multidegree $(1,1)$ we have the map  
		\[
		(u_1,r_1) \mapsto \left( \int_0^{u_1} udu - \int_0^{r_1} dr , \int_0^{u_1} du + \int_0^{r_1} 0\, dr ,\int_0^{u_1} u^3du - \int_0^{r_1} rdr  \right)
		\]
		and the corresponding algebraic theta divisor is given by $\theta(z_1,z_2,z_3) = z_2^4+4z_1z_2^2-4z_1^2-8z_3$. Finally, the multidegree $(0,2)$ gives the map
		\[
		(r_1,r_2) \mapsto \left( - \int_0^{r_1} dr- \int_0^{r_2} dr , \int_0^{r_1} 0\,dr+\int_0^{r_2} 0 \, dr ,- \int_0^{r_1} rdr - \int_0^{r_2} rdr \right)
		\]
		whose hypersurface is simply given by $\theta(z_1,z_2,z_3)=z_2$.$\triangle$
	\end{example}	
	
	\begin{example}
		\label{GenusFourNonAlgebraic}
		This example shows why the condition of $C$ being tree-like is necessary.
		Consider the reducible curve $C = \{xy - z^2= z(wy - x^2) =0 \} \subset \mathbb{P}^3$.  This is the complete intersection of a quadric and a cubic, hence a Gorenstein canonical curve of degree 6 and arithmetic genus 4.  It is easy to see that the decomposition of $C$ into irreducible components is 
		$C = C_1 \cup C_2 \cup C_3$
		where $C_1 = \{x =z =0\}$ and  $C_2 = \{y =z =0\}$ are lines meeting at $(0:0:0:1)$ 
		while $C_3 = \{xy - z^2 = wy - x^2 \}$ is a rational quartic curve of arithmetic genus 1 with a cusp at $(0:0:0:1)$. The curves $C_1$ and $C_3$ intersect at two points, namely $(0:1:0:0),(0:0:0:1)$ and this leads to a closed chain which is not homologous to $0$. Thus the curve is not tree-like. And indeed, if we write the normalization as
		$\widetilde{C} = \mathbb{P}^1 \sqcup \mathbb{P}^1 \sqcup \mathbb{P}^1$
		and take affine coordinates $r,s,t$ on the three components, where $r = 0, s = 0, t = 0$ all map to the point $(0:0:0:1)$,
		then one of the sections of ${\rm H}^0(C,\omega_C)$ can be written as $({dr}/{r}, {ds}/{s}, {-2 dt}/{t})$.
		Hence, the  Abel map will contain logarithmic terms.  $\triangle$ 
	\end{example}
	
	
	
	\begin{example}
		\label{RedEx}
		Consider the Gorenstein canonical curve $C$ of degree 6 and arithmetic genus 4 in $\mathbb{P}^3$ defined by the homogeneous equations $\{xy - zw = (y-x)(y-x+z)(y-x-z) = 0\}.$
		The curve $C$ has three components, a parabola $C_1$ in the plane defined by $\{y - x = 0\}$, and two 
		hyperbolas $C_2,C_3$ in the planes defined by $\{y - x + z = 0\}$ and
		$\{y - x - z = 0\}$.  These three components meet only at the point, 
		$(w : x : y : z) = (1 : 0 : 0 : 0)$ in affine 3-space, so topologically $C$ is a ``bouquet'' of 3 two-spheres and it has trivial first homology ${\rm H}_1(C,\mathbb{Z})$.  By Proposition~\ref{reduciblechar} we expect to obtain algebraic theta divisors for all choices of multidegrees.  We compute a basis for ${\rm H}^0(C,\omega_C)$ using \eqref{RosenlichtDiff} and we find:
		\begin{equation*}
		\left(dr, -ds, 0\right),\quad
		\left(dr, 0, -dt\right),\quad
		\left(2rdr, -sds, -tdt\right), \quad
		\left(\left(2r + 2r^2\right)dr, \left(-2s -s^2\right)ds, -t^2dt\right),
		\end{equation*}
		where $r,s,t$ are local coordinates on the three components such that $r = s = t = \infty$ gives the point $(1:0:0:0)$.
		
		We choose the points with $u=s=t=0$ as base points of the Abel map, and we get  different hypersurfaces for each multidegree. For example, with the multidegree $(1,1,1)$ we get the hypersurface in $\mathbb{C}^4$ as
		\[  z_1^4+4z_1^3z_2-6z_1^2z_2^2+4z_1z_2^3+z_2^4-12z_1^2z_2+12z_1z_2^2+24z_1z_3+12z_3^2-12z_1z_4-12z_2z_4 = 0. \]
		If instead we take the multidegree $(2,1,0)$ we obtain the hypersurface
		\[
		2z_1^3-3z_1^2z_2-z_2^3-3z_1^2+6z_1z_2-3z_2^2+6z_2z_3+6z_3-6z_4 = 0.
		\]
		All the other hypersurfaces have degree $3$ or $4$ as well.		$\triangle$
	\end{example}
	
	Notice that both of the hypersurfaces computed in Example~\ref{RedEx} have degree significantly smaller than the maximal degree 10 for
	algebraic theta divisors from irreducible curves of arithmetic 
	genus $g = 4$ from Theorem~\ref{theoremDeg}.  The same is true for the remaining two cases not considered in the example, which both have degree 3. 
	The sum of the degrees is 13, however, and that is greater than the maximal degree for algebraic theta divisors from irreducible curves of genus $g = 4$.
	Based on additional computational evidence we have generated in other examples like this, we would make the following conjecture.
	
	\begin{conjecture}  
		The bound in Theorem~\ref{theoremDeg} also holds for all irreducible components of the theta divisors constructed from reducible curves of arithmetic genus $g$.
	\end{conjecture}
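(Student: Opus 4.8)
Since the theta divisor of a reducible curve splits by multidegree (as recalled just before Proposition~\ref{reduciblechar}) and the claimed bound is asserted for each irreducible component, the plan is to fix a multidegree $\mathbf{d}=(d_1,\dots,d_m)$ with $d_1+\dots+d_m=g-1$ and bound the degree of the single component $\overline{C_0^{(\mathbf{d})}}$ at a time. By Proposition~\ref{reduciblechar}, algebraicity forces every $C_i$ to be rational and unibranch and forces $C$ to be tree-like, so the dual graph is a tree and the singular points of $C$ are exactly the intrinsic unibranch singularities of the components together with the crossing points where distinct components meet. I would then try to transport the degeneration-and-weight argument behind Theorem~\ref{thm:degreeprecise} to this setting: degenerate $C$ inside a torus-equivariant family that pulls apart one chosen singular point $P$ (intrinsic or a crossing) into a monomial model glued nodally to the remainder, so that the leading $s^0$-term of the family of theta equations factorizes as in Lemma~\ref{lemma:degeneration}, and the isobaric weight estimate of Lemma~\ref{lemma:bounddegree} controls the partial degree in the variables attached to $P$.

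Carrying this out, one would hope to obtain a local-contribution formula for $\deg \overline{C_0^{(\mathbf{d})}}$ as a sum over the singular points, each summand bounded by a partition size $|\lambda_P|$ but now \emph{capped} according to how many of the $d_i$ points are permitted to flow into $P$. The induction would run on the number of components together with the number of singularities, the multidegree $\mathbf{d}$ being tracked through each degeneration, with base case the irreducible monomial curve already handled by Corollary~\ref{cor:monomial} and Proposition~\ref{lemma:shapetau}. The final numerical bound would then follow by comparing the resulting sum of capped local contributions against $\tfrac{g(g+1)}{2}$.

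The main obstacle is that the degree estimate of Theorem~\ref{theoremDeg} does not transfer. On one hand, a fixed-multidegree component generally has degree \emph{strictly smaller} than $\sum_P|\lambda_P|$, so there is no single ``global'' theta polynomial whose leading monomial one can read off; on the other, the normalization sequence gives $\sum_P \delta_P = g+m-1$, which exceeds $g$ once $m\ge 2$, so even the crude bound $\sum_P \tfrac{\delta_P(\delta_P+1)}{2}$ overshoots $\tfrac{g(g+1)}{2}$. Concretely, in Example~\ref{RedEx} the component degrees already sum to $13>10$. The bound must therefore genuinely exploit the fact that each fixed $\mathbf{d}$ caps the local contribution at every singular point, and quantifying this cap is the crux. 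Compounding this, the tau-function machinery of Section~\ref{sec:thetatau} on which the irreducible argument ultimately rests is built from Krichever's single-point construction for \emph{irreducible} curves, so one first has to extend it to a multi-component setting and refine it per multidegree before the Schur--Weierstrass leading-term analysis that converts ``weight'' into ``degree'' can be invoked. I expect the weight estimate itself to go through once these foundations are laid; the real work is the combinatorial control of the multidegree, which is exactly what forces each component below the uniform bound $\tfrac{g(g+1)}{2}$.
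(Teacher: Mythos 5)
This statement is a \emph{conjecture} in the paper; the authors explicitly state that they do not see a way to prove it, precisely because (as Example~\ref{RedEx} shows) degenerating to a reducible curve need not preserve the total degree of the components of the theta divisor once it becomes reducible. So there is no proof in the paper against which to measure yours, and more importantly, your proposal is not a proof either: it is a plan whose central step is left open. You correctly identify that the crux is to quantify how a fixed multidegree $\mathbf{d}$ ``caps'' the local contribution $|\lambda_P|$ at each singular point (including the crossing points, whose $\delta$-invariants make $\sum_P \delta_P = g+m-1$ exceed $g$), but you never produce that capped local-contribution formula, nor the comparison of the resulting sum against $g(g+1)/2$. Without it the argument establishes nothing; everything after ``one would hope to obtain'' is aspiration rather than proof.

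There is also a structural difficulty you underestimate. The degeneration machinery of Theorem~\ref{thm:degreeprecise} relies on the leading $s^0$-term of the family of theta equations being the product of theta polynomials of the two pieces (Lemma~\ref{lemma:degeneration}), and on the lower-order terms having controlled partial degree via the torus weight (Lemma~\ref{lemma:bounddegree}). When the curve is already reducible and the divisor $\overline{C_0^{(\mathbf{d})}}$ is only one component of a reducible theta locus, the specialization of that single component under a further degeneration can jump between multidegree strata, and the paper's own remark after Example~\ref{RedEx} is exactly the observation that such degenerations fail to control the degrees of the individual components. Likewise the tau-function input (Krichever's construction, Proposition~\ref{lemma:shapetau}, Corollary~\ref{cor:monomial}) is set up only for irreducible curves with a single marked smooth point; extending it to a multi-point, multi-component Sato-Grassmannian frame refined by multidegree is itself a substantial project, not a routine adaptation. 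Your sketch is a reasonable research program, and it correctly diagnoses where the difficulty lies, but as it stands it does not prove the conjecture.
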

	
	We do not see a way to prove this at present, though, since Example~\ref{RedEx} shows that degenerating irreducible canonical curves to a reducible curve may not preserve the total degree of the components of the theta divisor when the theta divisor becomes reducible.

	\bigskip \medskip

	\noindent
	\footnotesize 
	{\bf Authors' addresses:}
	
	\smallskip
	
	\noindent Daniele Agostini, MPI-MiS Leipzig,
	\hfill  {\tt daniele.agostini@mis.mpg.de}
	
	\noindent 
	T\"urk\"u \"Ozl\"um \c{C}elik, Bo\u{g}azi\c{c}i University,
	\hfill {\tt turkuozlum@gmail.com}
	
	\noindent John B. Little,
	College of the Holy Cross,
	\hfill {\tt jlittle@holycross.edu}

\end{document}